\newtheorem{theorem}{Theorem}[section]
\newtheorem{proposition}[theorem]{Proposition}
\newtheorem{lemma}[theorem]{Lemma}
\newtheorem{definition}[theorem]{Definition}
\newtheorem{definitions}[theorem]{Definitions}
\newtheorem{example}[theorem]{Example}
\newtheorem{remark}[theorem]{Remark}
\let\c@equation\c@theorem
\begin{document}

\title{Stable Cluster Variables}

\author{Grace Zhang}

\date{}

\maketitle

\begin{abstract}
In \cite{eager-franco}, Eager and Franco introduce a change of basis transformation on the F-polynomials of Fomin and Zelevinsky \cite{fz4}, corresponding to rewriting them in the basis given by fractional brane charges rather than quiver gauge groups. This transformation seems to display a surprising stabilization property, apparently causing the first few terms of the polynomials at each step of the mutation sequence to coincide. Eager and Franco conjecture that this transformation will always cause the polynomials to converge to a formal power series as the number of mutations goes to infinity, at least for quivers possessing certain symmetries and along periodic mutation sequences respecting such symmetries. In this paper, we verify this convergence in the case of the Kronecker and Conifold quivers. We also investigate convergence in the $F_0$ quiver, though the results here are still incomplete. We provide a combinatorial interpretation for the stable cluster variables in each appropriate case.
\end{abstract}

\maketitle

\tableofcontents

\setlength{\parskip}{1em}

\section{Introduction}

Cluster algebras were originally developed by Fomin and Zelevinsky \cite{fz1} in order to study total positivity and canonical bases in Lie theory. Since then, numerous connections have been discovered between cluster algebras and various areas of mathematics and physics. In brief, a cluster algebra is a particular type of commutative ring, along with some additional combinatorial structure. In particular, a cluster algebra is given by a distinguished subset of $n$ elements (a cluster seed) along with mutation rules describing how to generate another subset of $n$ elements (a cluster). Each cluster can again be mutated into other clusters. The cluster algebra is constructed from the seed by repeatedly mutating it in all possible ways into all possible clusters. 

In this paper we will focus on a limited type of cluster algebra, known as \textit{skew-symmetric
cluster algebras of geometric type}. It is common to describe such a cluster algebra by drawing a finite directed graph, or a quiver, with $n$ labelled vertices. Then, the initial seed corresponds to the elements in the labelling, and the mutation rules are encoded by the configuration of edges. 

By fixing an infinite sequence of mutations, one can generate an infinite sequence of polynomials, one at each step. Subject to certain constraints on the seed and mutation rules, these polynomials are called $F$-polynomials. $F$-polynomials were introduced by Fomin and Zelevinsky in \cite{fz4}. 

In the study of quiver gauge theories, F-polynomials are well-suited for describing a phenomenon known as Seiberg duality \cite{franco-musiker,chuang-jafferis, aganagic}. In Section 9.5 of \cite{eager-franco} Eager and Franco apply a transformation to F-polynomials that expresses them in terms of a natural alternate basis, rather than in terms of the initial cluster seed variables. In the language of quiver gauge theory, their transformation corresponds to a change of variables from quiver gauge groups to fractional brane charges. Once rewritten in this new basis, the F-polynomials appear to become convergent expressions, approaching some formal power series as the number of mutations goes to infinity. In their paper, they illustrate this apparent property for the first few F-polynomials generated by the dP1 quiver. They conjecture that this stabilization property should hold for some larger class of quivers possessing certain symmetries and along periodic mutation sequences respecting such symmetries.

The purpose of the current paper is to investigate the stabilization property of transformed F-polynomials in 3 specific cases. We verify convergence for the Kronecker and the Conifold quivers, and discuss partial results for the $F_0$ quiver. 

\section{Background}

We review the relevant definitions and background concepts here from a combinatorial perspective, using the language of quivers. For more complete treatments, including more generalized cluster algebras, see \cite{intro, fz1, fz4, williams, gr18, derksen-weyman-zelevinsky}.

\subsection{Cluster Algebras (of Skew-Symmetric, Geometric Type, without Frozen Vertices)}
A \textbf{quiver} is a finite directed graph, possibly with multiple edges, but with no self-loops and no 2-cycles. We will work with quivers whose $n$ vertices are labelled, and where the labels come from a field of rational functions in $n$ variables, $F = \mathbb{C}(x_1, \ldots , x_n)$. Let $Q$ be a labelled quiver with vertices $v_1, v_2, \ldots , v_n$, and labels $\ell_1, \ell_2, \ldots , \ell_n$, respectively. $Q$ encodes a cluster algebra, a sub-algebra of the ambient field. To explain how, we require some additional notions. 

Let $B$ be the (signed) adjacency matrix of the quiver. $B_{ij} =  \text{ number of edges }v_i \to v_j$ (with this entry being negative if the edges point $v_j \to v_i$). Hence, $B$ is a skew-symmetric matrix. For any vertex $v_k$ define \textbf{quiver mutation at vertex k}, to be a new quiver $\mu_k(Q)$, with the same vertices $v_1, \ldots , v_n$ but with a new adjacency matrix $B'$, and new vertex labellings $\ell_i'$, as follows: 
$$B'_{ij} = \begin{cases}
-B_{ij} & i=k \text{ or } j = k\\
B_{ij} + B_{ik}B_{kj} & B_{ik} > 0 \text{ and } B_{kj} > 0\\
B_{ij} - B_{ik}B_{kj} & B_{ik} < 0 \text{ and } B_{kj} < 0\\
B_{ij} & \text{ otherwise }
\end{cases}$$

$$\ell_i' = \ell_i \text{ for all } i\neq k$$
$$\ell_k' \ell_k= \prod\limits_{j \ : \ B_{jk} >0} B_{jk} \ell_j + \prod\limits_{j \ : \ B_{kj} >0} B_{kj} \ell_j$$

Quiver mutation is equivalently described by the following algorithm. For an example of quiver mutation, see Figure \ref{fig:kroneckermutation}. 
\begin{enumerate}
    \item Update the label at $v_k$ to $\ell_k'=$ $$\dfrac{\prod\limits_{\text{incoming arrows } v_j \to v_k}\ell_j + \prod\limits_{\text{outgoing arrows } v_k \to v_j} \ell_j}{\ell_k}$$
    \item For every 2-path $v_i \to v_k \to v_j$, draw an arrow $v_i \to v_j$. 
    \item If any self-loops or 2-cycles were newly created, delete them.
    \item Reverse all arrows incident to $v_k$.
\end{enumerate}

For any quiver $\mu_{k_1} \mu_{k_2} \ldots \mu_{k_m}(Q)$ reached by a finite sequence of mutations from $Q$, the collection of labels of its vertices is known as a \textbf{cluster}. Any single label in a cluster is a \textbf{cluster variable}. The cluster associated to $Q$, the initial quiver before any mutations, is known as the \textbf{cluster seed}. Finally, the \textbf{cluster algebra} defined by $Q$ is the sub-algebra of $F$ generated by all possible cluster variables that can be reached from $Q$ by any finite sequence of mutations. 

\subsection{Frozen Vertices and F-polynomials}

To define F-polynomials, we will first introduce a modification of $Q$, called its \textbf{framed quiver}, and denoted $Q'$. The vertices of $Q'$ are $\{v_1, v_2, \ldots , v_n, v_1', v_2', \ldots , v_n'\}$. That is, we add a new vertex $v_i'$ for each existing vertex $v_i \in Q$. We will consider the new vertices $v_i'$ to be ``\textbf{frozen}," meaning that we will never mutate the quiver there. The edges of $Q'$ retain all edges from $Q$, with the addition of a new edge $v_i \to v_i'$ for each $i$. 

Next we fix a \textbf{mutation sequence} of vertices $\mu = (v_{i_1}, v_{i_2}, \ldots )$ which includes only non-``frozen" vertices. Finally, we specify the labelling of $Q’$ to be 1 at any non-``frozen" vertex, and $y_i$ at any ``frozen" vertex $v_i'$. Hence, we will be generating cluster variables in $\mathbb{C}(y_1, y_2, \ldots y_n)$. We will mutate $Q'$ iteratively according to the fixed mutation sequence, generating a new cluster at each step. Note that only one new cluster variable $F_j$ is actually generated at  each step $j$ in the mutation sequence, with all other cluster variables remaining unchanged. The sequence $\{F_j\}$ is the sequence of \textbf{F-polynomials} generated by $Q'$ with mutation sequence $\mu$. These functions are polynomials in the variables $y_i$ with integer coefficients. \cite{fz4,derksen-weyman-zelevinsky}

\subsection{Stable Cluster Variables}

We now summarize the apparently stabilizing transformation on $F$-polynomials introduced by Eager and Franco \cite{eager-franco}. For the remainder of the paper, we will abbreviate quiver vertices $\{v_1, v_2, \ldots, v_n, v_1', v_2',  \\ \ldots, v_n'\}$ as $\{1, 2, \ldots, n, 1', 2', \ldots, n'\}$. In addition, we will denote the initial framed quiver as $Q_0$, and the quiver generated after the $k$ steps of the mutation sequence as $Q_k$. The F-polynomial generated at the $k$th step of the mutation sequence will be denoted $F_k$. 

For each quiver $Q_k$ in the sequence, define a matrix $C_k$, whose $ij$-th entry is the number of arrows $i' \to j$ in $Q_k$ (with this being negative if the arrows point $j \to i'$). In other words, $C_k$ is the lower left $n \times n$ submatrix of the signed adjacency matrix. We will refer to these matrices $C_k$ as \textbf{C-matrices}. The inverse C-matrix will provide the stabilizing transformation we are interested in. 

\begin{definition}
Given some monomial $m = y_1^{a_1}\cdot y_2^{a_2}\cdot \ldots \cdot y_n^{a_n} \in \mathbb{C}[y_1, \ldots, y_n]$, its \textbf{$C_k$-matrix transformation} is $$\tilde{m} := y_1^{b_1}\cdot y_2^{b_2}\cdot \ldots \cdot y_n^{b_n} \text{, where } \vec{b} = C_k^{-1} \vec{a}$$
Writing the $F$-polynomial $F_k = \sum cm$ as a linear combination of monomials $m$, extend its \textbf{$C$-matrix transformation} by linearity: $$\tilde{F}_k := \sum_m c\tilde{m}$$ 
\end{definition}

\par In the remainder of the paper, we present some examples where $\tilde{F}_k$ converges to a formal power series as $k \to \infty$. In the context of these examples, we refer to the transformed F-polynomials $\tilde{F}_k$ as \textbf{stable cluster variables}. 

\section{Kronecker Quiver}
\begin{figure}[!htb]
    \centering
    \begin{tikzpicture}[round/.style={circle, draw, outer sep= 3},]
        \node[round] (0) [] {0};
        \node[round] (1) [right=of 0] {1};
        \node[round] (0') [above=of 0] {0'};
        \node[round] (1') [above=of 1] {1'};
        
        \draw[transform canvas={yshift=-1mm},->] (1) -- (0);
        \draw[transform canvas={yshift=1mm},->] (1) -- (0);
        \draw[->] (0) to (0');
        \draw[->] (1) to (1');
    \end{tikzpicture}
    \caption{Framed Kronecker Quiver}
    \label{fig:kronecker}
\end{figure}
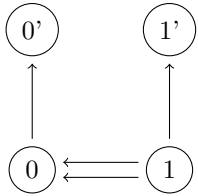

\par The first example we present is the Kronecker quiver, with its framed quiver pictured in Figure \ref{fig:kronecker}. We consider this example with respect to the mutation sequence $(0,1,0,1,\ldots)$. 

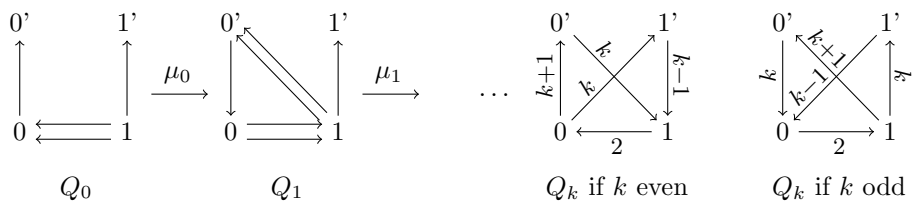
\begin{figure}[!htb]
\centering
\begin{tikzpicture}[node/.style={circle, inner sep = 0, outer sep= 2},]
        \node (title) at (0.75, -0.75) {$Q_0$};
        \node[node] (0) [] {0};
        \node[node] (1) [right=of 0] {1};
        \node[node] (0') [above=of 0] {0'};
        \node[node] (1') [above=of 1] {1'};
        
        \draw[transform canvas={yshift=-1mm},->] (1) -- (0);
        \draw[transform canvas={yshift=1mm},->] (1) -- (0);
        \draw[->] (0) to (0');
        \draw[->] (1) to (1');
        
        \node[color = black] () at (2.1,0.75) {$\mu_0$};
        \draw[->, color = black] (1.75,0.5) to (2.5,0.5);
\end{tikzpicture}
\begin{tikzpicture}[node/.style={circle, inner sep = 0, outer sep= 2},]
        \node (title) at (0.75, -0.75) {$Q_1$};
        \node[node] (0) [] {0};
        \node[node] (1) [right=of 0] {1};
        \node[node] (0') [above=of 0] {0'};
        \node[node] (1') [above=of 1] {1'};
        
        \draw[transform canvas={yshift=-1mm},->] (0) -- (1);
        \draw[transform canvas={yshift=1mm},->] (0) -- (1);
        \draw[transform canvas={xshift=-1mm},->] (1) -- (0');
        \draw[transform canvas={yshift=1mm},->] (1) -- (0');
        \draw[->] (0') to (0);
        \draw[->] (1) to (1');
        
        \node[color = black] () at (2.1,0.75) {$\mu_1$};
        \node () at (3.5, 0.5) {$\ldots$};
        \draw[->, color = black] (1.75,0.5) to (2.5,0.5);
\end{tikzpicture}
\begin{tikzpicture}[node/.style={circle, inner sep = 0, outer sep= 2},]
        \node (title) at (0.75, -0.75) {$Q_k$ if $k$ even};
        \node[node] (0) [] {0};
        \node[node] (1) [right=of 0] {1};
        \node[node] (0') [above=of 0] {0'};
        \node[node] (1') [above=of 1] {1'};
        
        \draw[->] (1) to (0); 
        \draw[->] (0) to (1'); 
        \draw[->] (0) to (0'); 
        \draw[->] (1') to (1); 
        \draw[->] (0') to (1); 
        
        \node[font = \small] () at (0.75, -0.2) {$2$};
        \node[rotate = 45, font = \small] () at (0.35, 0.6) {$k$};
        \node[rotate = 90, font = \small] () at (-0.2, 0.75) {$k$+1};
        \node[rotate = -90, font = \small] () at (1.6, 0.75) {$k-$1};
        \node[rotate = -45, font = \small] () at (0.6, 1.1) {$k$};
        
\end{tikzpicture}
\hspace{4.5mm}
\begin{tikzpicture}[node/.style={circle, inner sep = 0, outer sep= 2},]
        \node (title) at (0.75, -0.75) {$Q_k$ if $k$ odd};
        \node[node] (0) [] {0};
        \node[node] (1) [right=of 0] {1};
        \node[node] (0') [above=of 0] {0'};
        \node[node] (1') [above=of 1] {1'};
        
        \draw[->] (0) to (1); 
        \draw[->] (1') to (0); 
        \draw[->] (0') to (0); 
        \draw[->] (1) to (1'); 
        \draw[->] (1) to (0'); 
        
        \node[font = \small] () at (0.75, -0.2) {$2$};
        \node[rotate = 45, font = \small] () at (0.35, 0.6) {$k-$1};
        \node[rotate = 90, font = \small] () at (-0.2, 0.75) {$k$};
        \node[rotate = -90, font = \small] () at (1.6, 0.75) {$k$};
        \node[rotate = -45, font = \small] () at (0.6, 1.1) {$k$+1};
\end{tikzpicture}
\caption{The Kronecker quiver mutates with a predictable structure.}
\label{fig:kroneckermutation}
\end{figure}

\begin{figure}[!htb]
\centering
\begin{tabular}{|c|r|r|}
\hline
$k$ & $F_k$ & $\tilde{F}_k$\\
\hline
1 & $y_0 + 1$ & $\underline{y_0 + 1}$\\
2 & $y_0^2y_1 + y_0^2 + 2y_0 + 1$ & $y_0^2y_1^4 + \underline{2y_0y_1^2 + \underline{y_1 + 1}}
$\\
3 & $y_0^2y_1^2 + 2y_0^3y_1 + y_0^3 + 2y_0^2y_1 + 3y_0^2 + 3y_0 + 1$ & $y_0^9y_1^6 + 3y_0^6y_1^4 + 2y_0^5y_1^3 + \underline{3y_0^3y_1^2 + \underline{2y_0^2y_1 + \underline{y_0 + 1}}}$\\
4 & $\ldots + 6y_0^2y_1 + 4y_0^3 + 3y_0^2y_1 + 6y_0^2 + 4y_0 + 1$ & $\ldots + 3y_0^4y_1^6 + \underline{4y_0^3y_1^4 + \underline{3y_0^2y_1^3 + \underline{2y_0y_1^2 + \underline{y_1 + 1}}}}
$\\
\hline
\end{tabular}\\
    \caption{Table of the first few cluster variables, illustrating the stabilization property. The low order terms of the stable cluster variables match, up to a fluctuation between $y_0$ and $y_1$. (Entries in the last row are truncated).}
    \label{table:kronecker}
\end{figure}

Recall that we denote the sequence of quivers generated by the mutation sequence$\{Q_0, Q_1, \ldots\}$, the corresponding C-matrices $\{C_0, C_1, \ldots\}$, and the sequence of F-polynomials $\{F_1, F_2, \ldots\}$. We adopt the convention that $F_0 := 1$. 
\begin{lemma}
F-polynomials for the framed Kronecker quiver with mutation sequence $(0,1,0,1,\ldots)$ obey the following recurrence:
$$F_0 = 1$$ $$F_1 = y_0 + 1$$ $$F_kF_{k-2} = y_0^ky_1^{k-1} + F_{k-1}^2 \text{ for } k \geq 2$$
Further, the C-matrix and its inverse are given by

$C_k =
\begin{cases}
\begin{bmatrix}
-(k+1) &  k\\
-k & k-1
\end{bmatrix} & \text{if $k$ even}\\\\
\begin{bmatrix}
k & -(k+1) \\
k-1 & -k
\end{bmatrix} & \text{if $k$ odd}\\
\end{cases}$\hspace{4.5mm}
$C_k^{-1} =
\begin{cases}
\begin{bmatrix}
k-1 & -k \\
k & -(k+1)
\end{bmatrix} & \text{if $k$ even}\\\\
\begin{bmatrix}
k & -(k+1) \\
k-1 & -k
\end{bmatrix} & \text{if $k$ odd}\\
\end{cases}$
\end{lemma}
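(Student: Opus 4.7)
The plan is to establish the quiver structure shown in Figure \ref{fig:kroneckermutation} first, and then obtain both the F-polynomial recurrence and the $C$-matrices as immediate consequences. I would proceed by induction on $k$, showing that $Q_k$ has the claimed adjacency pattern and edge multiplicities. The base case $k=0$ is the framed Kronecker quiver by definition, and the inductive step consists of applying the four-step mutation algorithm at vertex $0$ (when $k$ is odd) or vertex $1$ (when $k$ is even) to $Q_{k-1}$ and verifying that the output matches the description of $Q_k$.

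The main technical obstacle is this inductive step, and specifically Step 2 of the mutation algorithm, which creates new arrows for each two-path through the mutated vertex. With only four vertices there are just a handful of edge-classes to track, but one must carefully handle cancellations from the newly formed 2-cycles (Step 3) and the reversal of arrows incident to the mutated vertex (Step 4). The structural symmetry between the even and odd cases (under swapping $0 \leftrightarrow 1$ and reversing all directions) means that the two parities can be handled in a single unified argument, which cuts the bookkeeping roughly in half.

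Once the quiver structure is known, the recurrence follows from a single application of the label-mutation formula. For $k$ odd, mutation at vertex $0$ in $Q_{k-1}$ has incoming contribution $F_{k-1}^{2}$ (from the two arrows $1 \to 0$) and outgoing contribution $y_0^{k} y_1^{k-1}$ (from the $k$ arrows $0 \to 0'$ and the $k-1$ arrows $0 \to 1'$), while the previous label at vertex $0$ is $F_{k-2}$, yielding
\[
F_k F_{k-2} = F_{k-1}^{2} + y_0^{k} y_1^{k-1}.
\]
The case $k$ even is symmetric and produces the same identity. The $C$-matrix entries are, by definition, the signed counts of arrows between frozen and non-frozen vertices, so they can be read directly off the quiver. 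Finally, the claimed inverses are confirmed by a one-line $2 \times 2$ computation showing $\det C_k = \pm 1$, after which the adjugate formula gives the explicit inverse.
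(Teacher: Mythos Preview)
Your proposal is correct and follows essentially the same approach as the paper's proof: establish the quiver structure $Q_k$ by induction, then read off both the exchange recurrence and the $C$-matrix directly from that structure, with the inverse obtained by a trivial $2\times 2$ computation. You give considerably more detail than the paper (which compresses the whole argument into three sentences), but the underlying strategy is identical.
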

\begin{proof}
$Q_k$ follows the predictable structure shown in Figure \ref{fig:kroneckermutation}, which is easily verified by induction. From the definition of cluster mutation, the recurrence is immediately read off of the structure of $Q_k$. 
Similarly, the C-matrix is immediate, and its inverse easily computed. 
\end{proof}

The two possible forms of $C_k^{-1}$ differ from each other by permuting the rows. This discrepancy accounts for the fluctuation between variables $y_0$ and $y_1$ seen in Figure \ref{table:kronecker}. We will from now on remove this fluctuation by eliminating one case, in order to simplify computation. We choose, without loss of generality, to follow the case of odd $k$. 

\subsection{Row Pyramids} F-polynomials for certain types of quivers and mutation sequences have a known interpretation as the generating functions of a combinatorial object known as pyramid partitions \cite{young,eklp}.  Here we review this combinatorial interpretation in the case of the Kronecker quiver. 

\begin{definition}
\label{defn:rowpyramid}
Let the \textbf{row pyramid} of length $k$, $R_{k}$, be the two-layer arrangement of stones with $k$ white stones on the top layer and $k-1$ black stones on the bottom layer. The smallest three row pyramids are shown below. 
\begin{center}
$R_1$ \begin{tikzpicture}
        \foreach \x in {}
        \draw[fill = gray] (\x + 0.5, 0) circle (4.5mm);
        \foreach \x in {0}
        \draw[fill = white] (\x, 0) circle (4.5mm);
\end{tikzpicture}
\hspace{4.5mm}
$R_2$ \begin{tikzpicture}
        \foreach \x in {0}
        \draw[fill = gray] (\x + 0.5, 0) circle (4.5mm);
        \foreach \x in {0,1}
        \draw[fill = white] (\x, 0) circle (4.5mm);
\end{tikzpicture}
\hspace{4.5mm}
$R_3$ \begin{tikzpicture}
        \foreach \x in {0,1}
        \draw[fill = gray] (\x + 0.5, 0) circle (4.5mm);
        \foreach \x in {0,1,2}
        \draw[fill = white] (\x, 0) circle (4.5mm);
\end{tikzpicture}
\end{center}
\end{definition}

\begin{definitions}
\item A \textbf{partition} of $R_k$ is a stable configuration achieved by removing stones from $R_k$. By stable, we mean that if a stone is removed, then any stone lying on top of it must also be removed. (We will draw partitions by showing the non-removed stones).

\item For any partition $P$ of $R_k$, its \textbf{weight} is $$weight(P)= y_0^{\text{ \# white stones removed}}y_1^{\text{\# black stones removed}}$$
\end{definitions}

\begin{example}
A partition of $R_9$ with weight $y_0^5y_1$.
\begin{center}
\begin{tikzpicture}
        \foreach \x in {0,1,2,3,5,6,7}
        \draw[fill = gray] (\x + 0.5, 0) circle (4.5mm);
        \foreach \x in {0,3,6,7}
        \draw[fill = white] (\x, 0) circle (4.5mm);
\end{tikzpicture}
\end{center}
\end{example}

\begin{proposition}
$F_k$ is the following generating function, or partition function, for $R_{k}$.
$$F_k = \sum_{\text{Partitions $P$ of } R_{k}} \text{ weight }(P)$$
\end{proposition}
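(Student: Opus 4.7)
The plan is to show the generating function $G_k := \sum_P \mathrm{weight}(P)$, summed over partitions of $R_k$, satisfies the same recurrence and initial conditions as the $F$-polynomials. The base cases are immediate: $R_0$ is empty so $G_0 = 1 = F_0$, and $R_1$ has exactly two partitions (keep or remove the single white stone), giving $G_1 = 1 + y_0 = F_1$.

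Next I would establish a linear recurrence for $G_k$. Let $H_k$ denote the sub-generating function for partitions of $R_k$ in which the rightmost white stone is removed. Splitting partitions according to whether the rightmost white stone (and then the rightmost black stone) is removed yields
\begin{align*}
G_k &= G_{k-1} + H_k, \\
H_k &= y_0 G_{k-1} + y_0 y_1 H_{k-1}.
\end{align*}
In each case the remaining stones form a copy of $R_{k-1}$, with either no extra constraint (contributing $G_{k-1}$) or the constraint that its rightmost white is also removed (contributing $H_{k-1}$). Eliminating $H_k = G_k - G_{k-1}$ gives the linear recurrence
\begin{align*}
G_k = (1 + y_0 + y_0 y_1)\, G_{k-1} - y_0 y_1\, G_{k-2} \qquad (k \geq 2).
\end{align*}

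Finally, I would derive the quadratic recurrence from the linear one. Let $\Delta_k := G_k G_{k-2} - G_{k-1}^2$. The linear recurrence expresses the column $(G_k, G_{k-1})^T$ as $(1+y_0+y_0y_1)(G_{k-1}, G_{k-2})^T - y_0 y_1 (G_{k-2}, G_{k-3})^T$, so expanding the determinant and using column operations gives
\begin{align*}
\Delta_k = \det\begin{pmatrix} G_k & G_{k-1} \\ G_{k-1} & G_{k-2} \end{pmatrix} = y_0 y_1 \det\begin{pmatrix} G_{k-1} & G_{k-2} \\ G_{k-2} & G_{k-3} \end{pmatrix} = y_0 y_1\, \Delta_{k-1}.
\end{align*}
Iterating from the base case $\Delta_2 = G_2 G_0 - G_1^2 = y_0^2 y_1$ yields $\Delta_k = y_0^k y_1^{k-1}$, exactly the recurrence for $F_k$ in the preceding lemma. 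Since $G_k$ shares the same initial data and recurrence as $F_k$, strong induction gives $G_k = F_k$. The main obstacle is the combinatorial case analysis producing the coupled recurrences for $G_k$ and $H_k$, where one must carefully track the stability constraint at the rightmost black stone; once those are in hand the passage to the $F$-polynomial recurrence is a formal determinant identity.
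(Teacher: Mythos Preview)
Your proof is correct and follows the same strategy the paper sketches: verify inductively that the partition generating function satisfies the same recurrence $G_kG_{k-2} = y_0^ky_1^{k-1} + G_{k-1}^2$ and initial conditions as the $F$-polynomials. The paper omits all details, whereas you supply them cleanly via the auxiliary linear recurrence and the Hankel-determinant identity $\Delta_k = y_0y_1\,\Delta_{k-1}$; this is a standard and efficient way to carry out exactly what the paper leaves to the reader.
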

\begin{proof}
This may be proven inductively by verifying that the same recurrence on F-polynomials $F_k$ also holds for the generating function. 
\end{proof}

\subsection{Proof of Stabilization}
Next, we prove that the transformed F-polynomials in the case of the Kronecker quiver with the given mutation sequence do indeed stabilize in the limit to a formal power series. Afterwards, we give a combinatorial interpretation of that limit in terms of an infinitely long row pyramid. 

\begin{proposition}
$$\tilde{F}_k = \sum_{\text{Partitions of } R_{k}}\dfrac{(y_0^ky_1^{k-1})^{\text{ \# white stones removed}}}{(y_0^{k+1}y_1^{k})^{\text{\# black stones removed}}}$$
\end{proposition}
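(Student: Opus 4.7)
The plan is to combine the two results already established in the excerpt: the combinatorial expansion of $F_k$ from the preceding proposition, and the explicit formula for $C_k^{-1}$ from the lemma. Since $\tilde{F}_k$ is defined by linearly extending the monomial transformation $y_1^{a_1} y_2^{a_2} \mapsto y_1^{b_1} y_2^{b_2}$ with $\vec{b} = C_k^{-1} \vec{a}$, it suffices to apply this transformation term-by-term to the partition-function expression for $F_k$ and verify that each monomial is sent to the claimed summand.

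Concretely, I would proceed as follows. First, I would write
\[
F_k = \sum_{P} y_0^{w(P)} y_1^{b(P)},
\]
where the sum ranges over partitions $P$ of $R_k$, and $w(P)$, $b(P)$ denote the numbers of white and black stones removed, invoking the preceding proposition. Next, following the convention (adopted in the paragraph after the lemma) of working in the odd-$k$ case, I would substitute
\[
C_k^{-1} = \begin{bmatrix} k & -(k+1) \\ k-1 & -k \end{bmatrix}
\]
and compute the image of a single monomial $y_0^a y_1^b$ under the $C_k$-matrix transformation as
\[
y_0^{ka - (k+1)b}\, y_1^{(k-1)a - kb} \;=\; \frac{y_0^{ka}\, y_1^{(k-1)a}}{y_0^{(k+1)b}\, y_1^{kb}} \;=\; \frac{(y_0^k y_1^{k-1})^{a}}{(y_0^{k+1} y_1^{k})^{b}}.
\]
Setting $a = w(P)$ and $b = b(P)$ and summing over partitions immediately yields the claimed formula.

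Since the transformation is defined on monomials and extended linearly, and since the partition function expression for $F_k$ already separates into a sum over monomials indexed by partitions, there is essentially no obstacle: the proof is a direct computation. The only care required is a consistent choice between the two cases of $C_k^{-1}$, and because the two cases differ by a permutation of rows (swapping the roles of $y_0$ and $y_1$), restricting attention to odd $k$ as stipulated in the preceding paragraph removes the ambiguity and gives a clean statement. Thus the proof reduces to a one-line substitution, and I would present it as such.
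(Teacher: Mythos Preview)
Your proposal is correct and follows exactly the same approach as the paper: apply the known form of $C_k^{-1}$ to a generic monomial $y_0^a y_1^b$ in the partition-function expansion of $F_k$, then regroup the resulting exponents into the quotient form. The paper's proof is in fact more terse than yours, simply noting that $y_0^a y_1^b \mapsto y_0^{k(a-b)-b} y_1^{k(a-b)-a}$ and leaving the regrouping to the reader.
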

\begin{proof}
Since we know the precise form of the matrix $C_k$, we can verify that a monomial $m = y_0^{a} y_1^{b}$ transforms to $\tilde{m} = y_0^{k(a-b) - b} y_1^{k(a -b)-a}$. Then the proposition follows by transforming the generating function according to this rule, with $a = \#$ white stones removed, $b = \#$ black stones removed. Then, regroup terms. 
\end{proof}

\begin{definition}
A \textbf{simple partition} of $R_k$ is a partition of $R_k$ such that the removed white stones form one consecutive block, and no exposed black stones remain. The trivial partition with no stones removed is a simple partition.
\label{defn:simplepartitionrowpyramid}
\end{definition}

\begin{example}
A simple partition of $R_9$, with 3 white and 2 black stones removed.
\begin{center}
\begin{tikzpicture}
        \foreach \x in {0,1,4,5,6,7}
        \draw[fill = gray] (\x + 0.5, 0) circle (4.5mm);
        \foreach \x in {0,1,5,6,7,8}
        \draw[fill = white] (\x, 0) circle (4.5mm);
\end{tikzpicture}
\end{center}
\end{example}

The idea of the proof of the stabilization property is that the stable terms in $\tilde{F}_k$ are the contributions exactly from the simple partitions. 

\begin{theorem}
For the Kronecker quiver with $\mu = (0,1,0,1, \ldots)$ $$ \lim_{k \to \infty} \tilde{F}_k = 1 + y_0 + 2y_0^2y_1 + 3y_0^3y_1^2 + 4y_0^4y_1^3 + \ldots$$ 
$$= 1 + \sum_{i=1}^\infty i \cdot y_0^{i} \cdot y_1^{i-1} 
$$
\label{thm:kronecker}
\end{theorem}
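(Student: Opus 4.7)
The plan is to use the Proposition expressing $\tilde F_k$ as a sum over partitions of $R_k$, and to show that for every fixed monomial $y_0^s y_1^t$, its coefficient in $\tilde F_k$ eventually stabilizes (as $k \to \infty$) to the coefficient prescribed by the claimed limit. By the transformation formula, a partition with $a$ white and $b$ black stones removed contributes the monomial $y_0^{ka-(k+1)b} y_1^{(k-1)a - kb}$, so the coefficient of $y_0^s y_1^t$ in $\tilde F_k$ equals the number of partitions of $R_k$ whose $(a,b)$ satisfies $a - b = s - t$ and $b = k(s-t) - s$.

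Setting $d := s - t$, I would first dispose of every case except $d = 1$. For $(s,t) = (0,0)$ only the trivial partition contributes, giving coefficient $1$, matching the constant term of the claimed limit. For $d = 0$ and $s \geq 1$, the required $b = -s$ is negative. For $d < 0$, $b = kd - s$ is negative for large $k$. For $d \geq 2$, the required $a = (k+1)d - s$ exceeds $k$ once $k > s - d$. Hence all these monomials stably have coefficient $0$, again matching the claim.

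The heart of the proof is the case $d = 1$, corresponding to $y_0^i y_1^{i-1}$ for $i \geq 1$; here $a = k-i+1$ and $b = k-i$, and we must show the number of such partitions is exactly $i$ for all $k \geq i$. The key lemma is that a partition of $R_k$ satisfies $b = a - 1$ if and only if it is simple. To prove the lemma, suppose the removed whites form $m$ consecutive runs of sizes $a_1, \ldots, a_m$; then a black can be removed only if both whites immediately above it lie in the same run, so at most $\sum_{j} (a_j - 1) = a - m$ blacks can be removed. Thus $b \leq a - m$, and the equality $b = a - 1$ forces $m = 1$ together with the removal of every interior black of the unique run, which is precisely the definition of a simple partition. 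Finally, a block of length $a = k-i+1$ fits inside a row of length $k$ in exactly $k - a + 1 = i$ positions, giving the coefficient $i$.

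Taking termwise (formal) limits and assembling the stabilized coefficients yields $1 + \sum_{i \geq 1} i\, y_0^i y_1^{i-1}$, as desired. The main obstacle is the block-counting lemma for simple partitions together with the careful case analysis ruling out non-simple partitions in the limit; once these are in hand, the theorem follows by bookkeeping.
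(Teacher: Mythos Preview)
Your proposal is correct and follows essentially the same approach as the paper: both arguments invert the $C_k$-transformation to identify which partitions of $R_k$ contribute to a fixed monomial $y_0^s y_1^t$, then split into the cases $s-t=1$ (simple partitions, counted by the number of placements of a consecutive block) and $s-t\neq 1$ (eventually no contribution). Your block-counting lemma (showing $b\le a-m$ for $m$ runs, hence $b=a-1$ forces simplicity) makes explicit the step the paper leaves as ``a straightforward combinatorial observation,'' but otherwise the structure and ideas coincide.
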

\begin{proof}
The term $1$ clearly stabilizes, since every $F_k$ includes $1$ as a term, coming from the trivial partition with no stones removed. The term remains unchanged under the linear transformation $C_k$. 

\underline{Claim}: For any monomial $y_0^ay_1^b \neq 1$ in $F_k$, we must have $a > b$.\\
In $R_k$ it is impossible to remove as many black stones as white stones, since a black stone can only be removed after both white stones on top of it have been removed. Since $F_k$ is the partition function for $R_{k}$ the claim follows.

\underline{Claim}: For any monomial $y_0^{a'}y_1^{b'} \neq 1$ in $\tilde{F}_k$, we must have $a' > b'$.\\ 
Let $m = y_0^ay_1^b$ be any monomial in $F_k$. $C_k$ transforms it to $\tilde{m} = y_0^{k(a-b)-b}y_1^{k(a-b)-a}$. By the previous claim, $b < a$. The claim follows.

So let $\tilde{m} = y_0^{a}y_1^{a-j}$ be a monomial, with $j \geq 1$.

\underline{Case 1}: $j = 1$. So $\tilde{m} = y_0^{a}y_1^{a-1}$. We claim that there is some $K$ such that for all $k \geq K$, $\tilde{m}$ appears in $\tilde{F}_k$ with coefficient $a$. 

Using the matrix $C_k$, $\tilde{m}$ appears in $\tilde{F_k}$ if and only if the term $y_0^{k-a+1}y_1^{k-a}$ appears in $F_k$. This term corresponds to a partition with $(k-a + 1)$ white stones removed and $(k-a)$ black stones removed. Note that since the difference is 1, it must be a simple partition. It is a straightforward combinatorial observation that there are $a$ such partitions whenever $k \geq a$ and 0 such partitions whenever $k < a$. So the claim holds with $K = a$.

\underline{Case 2}: $j\geq 2$. We claim that for sufficiently large $k$, $\tilde{m} = y_0^{a}y_1^{a-j}$ does not appear in $\tilde{F}_k$.

Suppose $\tilde{m}$ appears in $\tilde{F}_z$ for some $z$. Using the matrix $C_z$, it corresponds to the term $y_0^{zj - a + j}y_1^{zj-a}$ in $F_z$. If $\tilde{m}$ appears in $\tilde{F}_{z+1}$, then it corresponds to the term $y_0^{zj - a + 2j}y_1^{zj - a + j}$ in $F_{z+1}$, using $C_{z+1}$. That is, we add $j$ to each exponent. However, increasing from $z$ to $z+1$ adds only one stone of each color to $R_z$. So if $j\geq 2$, then after a finite number of steps, the exponents will grow too large for any possible partition. 
\end{proof}

\subsection{Combinatorial Interpretation of the Limit}

\par We now give a combinatorial interpretation for $\lim_{k \to \infty} \tilde{F}_k$. This interpretation will be generalized in the next section.

\begin{definition}
Let $R_\infty$ be the row pyramid extending infinitely toward the center as shown.

\begin{center}
\begin{tikzpicture}
        \foreach \x in {0,1,2,3}
        \draw[fill = gray] (\x + 0.5, 0) circle (4.5mm);
        \foreach \x in {0,1,2,3}
        \draw[fill = white] (\x, 0) circle (4.5mm);
        
        \foreach \x in {5,6,7,8}
        \draw[fill = gray] (\x + 0.5, 0) circle (4.5mm);
        \foreach \x in {6,7,8,9}
        \draw[fill = white] (\x, 0) circle (4.5mm);

        \draw[color = white, fill = white] (3.55,-0.5) rectangle (5.45, 0.5);
        \node () at (4.5,0) {\ldots};
\end{tikzpicture}
\end{center}
\end{definition}

\begin{definitions}
\item A \textbf{partition} of $R_\infty$ is a stable configuration achieved by removing \textbf{an infinite number of stones, such that only a finite number of stones remains}. 

\item A \textbf{simple partition} of $R_\infty$ is a partition of $R_\infty$ such that the removed white stones form one consecutive (infinite) block, and no exposed black stones remain. 

\item Define the \textbf{weight} of a partition $P$ of $R_\infty$ as $$weight(P) = y_0^{\text{\# non-removed white stones + 1}}y_1^{\text{\# non-removed black stones}}$$
Note that the number of non-removed white stones or black stones is actually the same. We have chosen to write the expression in this form in order to make it look more similar to a typical weight function. 
\end{definitions}

\begin{example} A simple partition of $R_\infty$ with weight $y_0^{4}y_1^{3}$. 
\begin{center}
\begin{tikzpicture}
        \foreach \x in {0}
        \draw[fill = gray] (\x + 0.5, 0) circle (4.5mm);
        \foreach \x in {0}
        \draw[fill = white] (\x, 0) circle (4.5mm);
        
        \foreach \x in {7,8}
        \draw[fill = gray] (\x + 0.5, 0) circle (4.5mm);
        \foreach \x in {8,9}
        \draw[fill = white] (\x, 0) circle (4.5mm);

        \draw[color = white, fill = white] (3.55,-0.5) rectangle (5.45, 0.5);
        \node () at (4.5,0) {\ldots};
\end{tikzpicture}
\end{center}
\end{example}

\begin{definition}
Define a partition function $$S = \sum_{\text{Simple partitions $P$ of } R_\infty} weight(P)$$
\label{defn:S}
\end{definition}

\begin{proposition}
$$\lim_{k \to \infty} \tilde{F}_k = 1 + S$$
\label{prop:S}
\end{proposition}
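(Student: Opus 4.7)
The plan is to compute $S$ explicitly by classifying simple partitions of $R_\infty$, then verify the answer matches the closed form for $\lim_{k \to \infty} \tilde F_k$ already supplied by Theorem \ref{thm:kronecker}.

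First I would show that a simple partition of $R_\infty$ is completely determined by a pair of nonnegative integers $(p,q)$, where $p$ is the number of non-removed white stones on the left ``cap'' and $q$ is the number of non-removed white stones on the right ``cap.'' Indeed, since the removed whites must form one consecutive infinite block and only finitely many stones may remain, the block must consist of everything strictly inside the two caps; the caps themselves must be initial segments at each end (otherwise the removed whites would split into two non-consecutive blocks relative to the end structure). So $(p,q) \in \mathbb{Z}_{\geq 0}^2$ parametrizes the choices of remaining whites.

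Next I would argue that in a simple partition, the forced black pattern on the left cap is exactly $p$ non-removed blacks (and similarly $q$ on the right). The key observation is that the black stone sitting between white $p-1$ (non-removed) and white $p$ (removed) has exactly one of its two covering whites removed; it is therefore not exposed, so ``no exposed blacks remain'' does not force its removal, and stability forbids its removal (removing it would require removing white $p-1$). All further blacks to its right have both covering whites removed, hence are exposed and must be removed. Together with the $p-1$ blacks fully inside the cap (which cannot be removed by stability), this gives exactly $p$ remaining blacks on the left, and symmetrically $q$ on the right.

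Consequently, the partition indexed by $(p,q)$ contributes weight
\begin{equation*}
y_0^{(p+q)+1}\, y_1^{p+q}.
\end{equation*}
Summing over $(p,q) \in \mathbb{Z}_{\geq 0}^2$ and collapsing by the diagonal $s = p+q$, which has $s+1$ representations,
\begin{equation*}
S = \sum_{p,q \geq 0} y_0^{p+q+1} y_1^{p+q} = \sum_{s \geq 0} (s+1)\, y_0^{s+1} y_1^{s} = \sum_{i \geq 1} i\, y_0^{i} y_1^{i-1}.
\end{equation*}
Comparing with Theorem \ref{thm:kronecker} yields $\lim_{k \to \infty} \tilde F_k = 1 + S$.

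The only real subtlety — and the step where I would take the most care — is the black-stone bookkeeping in step two: verifying that the boundary black at the interface between each cap and the infinite removed region is forced to remain, so that one obtains $p$ blacks (not $p-1$) on the left cap. This matches the extra ``$+1$'' in the weight convention $y_0^{\#\text{non-removed white}+1}y_1^{\#\text{non-removed black}}$ and is what makes the coefficients $i$ appear instead of $i-1$. Everything else is a routine bijective counting argument, and no further analytic input is needed beyond Theorem \ref{thm:kronecker}.
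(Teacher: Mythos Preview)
Your proof is correct. The paper actually states Proposition~\ref{prop:S} without proof, treating it as an immediate consequence of Theorem~\ref{thm:kronecker} and Definition~\ref{defn:S}; your explicit parametrization of the simple partitions of $R_\infty$ by pairs $(p,q)\in\mathbb{Z}_{\geq 0}^2$, the check that exactly $p+q$ black stones remain, and the resulting identity $S=\sum_{i\geq 1} i\,y_0^{i}y_1^{i-1}$ is precisely the verification the paper leaves to the reader. Your care with the boundary black stone (half-covered, hence neither exposed nor removable) is the one nontrivial point, and you have it right.
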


\begin{remark}
The constant term 1 appears unnatural here. In the next section, we will see how a generalization of $R_\infty$ gives rise to a analogous partition function. When viewed as a special case of this generalization, we will see that the constant term 1 actually arises naturally as a term in the partition function. 
\end{remark}

\section{Conifold Quiver}
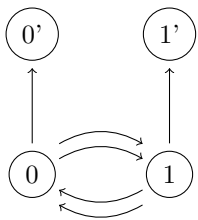
\begin{figure}[!htb]
    \centering
    \begin{tikzpicture}[round/.style={circle, draw, outer sep= 3},]
        \node[round] (0) [] {0};
        \node[round] (1) [right=of 0] {1};
        \node[round] (0') [above=of 0] {0'};
        \node[round] (1') [above=of 1] {1'};
        
        \draw[transform canvas={yshift=0mm},->] (0) to [bend left] (1);
        \draw[transform canvas={yshift=2mm},->] (0) to [bend left] (1);
        
        \draw[transform canvas={yshift=0mm},->] (1) to [bend left] (0);
        \draw[transform canvas={yshift=-2mm},->] (1) to [bend left] (0);
        
        \draw[->] (0) to (0');
        \draw[->] (1) to (1');
    \end{tikzpicture}
    \caption{Framed Conifold Quiver}
    \label{fig:conifold}
\end{figure}

\par The second example we present is the Conifold quiver, whose framed quiver is pictured in Figure \ref{fig:conifold}. We consider this example with respect to the mutation sequence $(0,1,0,1,\ldots)$. Note that the conifold is a quiver with 2-cycles, which according to the usual conventions we cannot mutate. For this example, we will mutate the quiver as usual, but at every step, remove any self-loops that were created. 

\par A table again suggests that the C-matrix transformation stabilizes the cluster variables. (Entries in the last two rows are truncated).

\begin{center}
\begin{tabular}{|c|r|r|}
\hline
$k$ & $F_k$ & $\tilde{F}_k$\\
\hline
1 & $y_0 + 1$ & $\underline{y_0 + 1}$\\
2 & $y_0^4y_1 + 2y_0^3y_1 + y_0^2y_1 + y_0^2 + 2y_0 + 1$ & $y_0^2y_1^5 + y_0^2y_1^4 + \underline{2y_0y_1^3 + 2y_0y_1^2 + \underline{y_1 + 1}}
$\\
3 & $\ldots +
6y_0^3y_1 + y_0^3 + 2y_0^2y_1 + 3y_0^2 + 3y_0 + 1$ & $\ldots + \underline{4y_0^4y_1^2 + 3y_0^3y_1^2 + \underline{2y_0^3y_1 + 2y_0^2y_1 + \underline{y_0 + 1}}}$\\
4 & $\ldots + 12y_0^3y_1 + 4y_0^3 + 3y_0^2y_1 +
6y_0^2 + 4y_0 + 1$ & $\ldots + \underline{4y_0^2y_1^4 + 3y_0^2y_1^3 + \underline{2y_0y_1^3 + 2y_0y_1^2 + \underline{y_1 + 1}}}$\\
\hline
\end{tabular}\\
\end{center}
\vspace{4.5mm}
\par Here is a larger number of stable terms. They do not seem to follow an obvious pattern:
\begin{multline*}
\ldots + 33y_0^{10}y_1^6 + 60y_0^9y_1^7 + 63y_0^9y_1^6 + 8y_0^8y_1^7 + 10y_0^9y_1^5 + 40y_0^8y_1^6 + 32y_0^8y_1^5\\ + 7y_0^7y_1^6 + 3y_0^8y_1^4 + 28y_0^7y_1^5 + 14y_0^7y_1^4 + 6y_0^6y_1^5 + 16y_0^6y_1^4 + 6y_0^6y_1^3 + 5y_0^5y_1^4\\ + 10y_0^5y_1^3 + y_0^5y_1^2 + 4y_0^4y_1^3 + 4y_0^4y_1^2 + 3y_0^3y_1^2 + 2y_0^3y_1 + 2y_0^2y_1 + y_0 + 1
\end{multline*}

The conifold also mutates with a predictable structure, and it is easy to see that the $C$-matrix has the same form as in Section 3 with the Kronecker quiver. As we did in Section 3, we will without loss of generality eliminate the even case in order to remove the fluctuation in variables in $\tilde{F}_k$:

\begin{lemma}
$C_k = C_k^{-1} = 
\begin{bmatrix}
k & -(k+1) \\
k-1 & -k
\end{bmatrix}$
\end{lemma}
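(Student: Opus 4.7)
The plan is to mirror the Kronecker proof: describe the full arrow structure of $Q_k$ explicitly, verify it by induction on $k$, read off $C_k$ from the lower-left $2\times 2$ block, and then verify $C_k = C_k^{-1}$ by direct calculation. The inductive guess for (odd) $Q_k$ is that the non-frozen part carries a persistent $2$-cycle between vertices $0$ and $1$ ($2$ arrows each direction), while the signed arrow counts between frozen and non-frozen vertices follow exactly the same formulas as in the Kronecker case: $k$ arrows $0'\to 0$, $k-1$ arrows $1'\to 0$, $k$ arrows $1\to 1'$, and $k+1$ arrows $1\to 0'$. For even $k$ one gets the row-permuted variant, but we have already agreed to track only the odd case. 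Mutation may additionally create arrows between the two frozen vertices $0'$ and $1'$ (from $2$-paths of the form $0'\to 0\to 1'$ and similar), but these do not enter the $C$-matrix and never influence later mutations, so it suffices just to note their existence without tracking the exact multiplicities.

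The inductive step uses the mutation algorithm from Section~2 literally, with the proviso that created self-loops are discarded and $2$-cycles are retained. The new feature relative to the Kronecker case is that mutating at $0$ now produces $2$-paths $1\to 0\to 1$ from the pre-existing $2$-cycle. Per the paper's convention these are self-loops and simply deleted, while the reversal step exchanges the $2$ arrows $0\to 1$ with the $2$ arrows $1\to 0$, leaving the $(2,2)$ bond intact. The remaining $2$-paths through $0$ (or through $1$ in the next step) produce exactly the new frozen-to-non-frozen arrows that promote $(k{-}1,k,k,k{+}1)$ to $(k,k{+}1,k{+}1,k{+}2)$, matching the expected signed $C$-entries at step $k+2$ (two mutations later). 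A small piece of bookkeeping to check explicitly in the base case $k=1$ is enough to make the induction self-propagating.

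For the inverse, once the form of $C_k$ is established it is a one-line computation: $\det C_k = k(-k) - (-(k+1))(k-1) = -k^2 + (k^2-1) = -1$, so $C_k^{-1} = (-1)^{-1}\begin{pmatrix} -k & k+1 \\ -(k-1) & k \end{pmatrix} = \begin{pmatrix} k & -(k+1) \\ k-1 & -k \end{pmatrix} = C_k$. Equivalently, one can verify $C_k^2 = I$ by a direct multiplication, which makes the involutive property manifest.

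The main obstacle is the bookkeeping in the induction, specifically showing that the ``extra'' $2$-paths coming from the persistent $0\text{-}1$ $2$-cycle really do cancel out at the level of the signed $C$-matrix, even though the raw (unsigned) arrow counts in the Conifold differ substantially from those in the Kronecker quiver. The clean way to handle this is to separate the inductive claim into two pieces: (i) the non-frozen subgraph is always the doubled $2$-cycle, and (ii) the signed frozen/non-frozen incidence evolves by the Kronecker recurrence. Verifying (i) reduces the verification of (ii) to the same computation that underlies the Kronecker lemma, at which point the $C$-matrix formula drops out.
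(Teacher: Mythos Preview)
Your proposal is correct and follows essentially the same approach as the paper. The paper itself gives almost no proof here---it simply asserts that ``the conifold also mutates with a predictable structure, and it is easy to see that the $C$-matrix has the same form as in Section~3 with the Kronecker quiver''---so your write-up is in fact considerably more detailed than the original, correctly isolating the two inductive claims (persistence of the $(2,2)$ bond between $0$ and $1$, and Kronecker-identical evolution of the signed frozen/non-frozen block) and noting that the only new $2$-paths are the self-loops $1\to 0\to 1$, which are discarded. One minor remark: the frozen-to-frozen arrows you mention out of caution do not actually arise in this example, but as you observe this is harmless either way.
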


\subsection{(2-color) Aztec Diamond Pyramids}

The F-polynomials generated by the conifold quiver also have a known combinatorial interpretation as the generating functions of certain pyramid partitions \cite{young,eklp}. We review this interpretation here. 

\begin{definition}
Let $AD_k$ be the 2-color Aztec diamond pyramid with $k$ white stones on the top layer. The first 4 cases are shown below. For example, in $AD_2$, there are a total of 4 white stones and 1 black stone. In $AD_3$, there are a total of 10 white stones and 4 black stones.  
\begin{center}
$AD_1$ \begin{tikzpicture}[scale = 0.8]
        \foreach \x in {}
        \draw[fill = gray] (\x, 0) circle (4.5mm);
        \foreach \x in {0}
        \draw[fill = white] (\x, 0) circle (4.5mm);
\end{tikzpicture}
\hspace{4.5mm}
$AD_2$ \begin{tikzpicture}[scale = 0.8]
        \foreach \x in {0} \foreach \y in {-0.5, 0.5}
        \draw[fill = white] (\x, \y) circle (4.5mm);
        \foreach \x in {0} \foreach \y in {0}
        \draw[fill = gray] (\x, \y) circle (4.5mm);
        \foreach \x in {-0.5,0.5} \foreach \y in {0}
        \draw[fill = white] (\x, \y) circle (4.5mm);
\end{tikzpicture}
\hspace{4.5mm}
$AD_3$ \begin{tikzpicture}[scale = 0.8]
        \foreach \x in {0} \foreach \y in {-1, 0, 1}
        \draw[fill = white] (\x, \y) circle (4.5mm);
        
        \foreach \x in {0} \foreach \y in {-0.5, 0.5}
        \draw[fill = gray] (\x, \y) circle (4.5mm);
        
        \foreach \x in {-0.5,0.5} \foreach \y in {-0.5, 0.5}
        \draw[fill = white] (\x, \y) circle (4.5mm);
        
        \foreach \x in {-0.5, 0.5} \foreach \y in {0}
        \draw[fill = gray] (\x, \y) circle (4.5mm);
        \foreach \x in {-1,0,1} \foreach \y in {0}
        \draw[fill = white] (\x, \y) circle (4.5mm);
\end{tikzpicture}
\hspace{4.5mm}
$AD_4$ \begin{tikzpicture}[scale = 0.8]
        \foreach \x in {0} \foreach \y in {-1.5, -0.5, 0.5, 1.5}
        \draw[fill = white] (\x, \y) circle (4.5mm);
        
        \foreach \x in {0} \foreach \y in {-1, 0, 1}
        \draw[fill = gray] (\x, \y) circle (4.5mm);
        
        \foreach \x in {-0.5,0.5} \foreach \y in {-1, 0, 1}
        \draw[fill = white] (\x, \y) circle (4.5mm);
        
        \foreach \x in {-0.5, 0.5} \foreach \y in {-0.5, 0.5}
        \draw[fill = gray] (\x, \y) circle (4.5mm);
        
        \foreach \x in {-1,0,1} \foreach \y in {-0.5, 0.5}
        \draw[fill = white] (\x, \y) circle (4.5mm);
        
        \foreach \x in {-1, 0, 1} \foreach \y in {0}
        \draw[fill = gray] (\x, \y) circle (4.5mm);
        
        \foreach \x in {-1.5, -0.5, 0.5, 1.5} \foreach \y in {0}
        \draw[fill = white] (\x, \y) circle (4.5mm);
    \end{tikzpicture}

\end{center}

\end{definition}

\begin{definitions}
\item As in the previous section, a \textbf{partition} of $AD_k$ is a stable configuration achieved by removing stones from $AD_k$.

\item As in the previous section, for any partition $P$ of $AD_k$, its \textbf{weight} is $$weight(P)= y_0^{\text{ \# white stones removed}}y_1^{\text{\# black stones removed}}$$
\end{definitions}

\begin{example} A partition of $AD_4$ with weight $y_0^{4}y_1^{2}$
\begin{center}
\begin{tikzpicture}[scale = 0.8]
        \foreach \x in {0} \foreach \y in {-1.5, -0.5, 0.5, 1.5}
        \draw[fill = white] (\x, \y) circle (4.5mm);
        
        \foreach \x in {0} \foreach \y in {-1, 0, 1}
        \draw[fill = gray] (\x, \y) circle (4.5mm);
        
        \foreach \x in {-0.5,0.5} \foreach \y in {-1, 0, 1}
        \draw[fill = white] (\x, \y) circle (4.5mm);
        
        \foreach \x in {-0.5, 0.5} \foreach \y in {-0.5, 0.5}
        \draw[fill = gray] (\x, \y) circle (4.5mm);
        
        \foreach \x in {-1,1} \foreach \y in {-0.5, 0.5}
        \draw[fill = white] (\x, \y) circle (4.5mm);
        \draw[fill = white] (0, -0.5) circle (4.5mm);
        
        \foreach \x in {-1} \foreach \y in {0}
        \draw[fill = gray] (\x, \y) circle (4.5mm);
        
        \foreach \x in {-1.5} \foreach \y in {0}
        \draw[fill = white] (\x, \y) circle (4.5mm);
    \end{tikzpicture}
\end{center}
\label{example:A4partition}
\end{example}

\begin{theorem}[Elkies-Kuperberg-Larsen-Propp, 1992]
The F-polynomials are partition functions of $AD_k$.
$$F_k = \sum_{\text{Partitions $P$ of } AD_{k}} weight(P)$$
\end{theorem}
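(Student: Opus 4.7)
The plan is to mirror the Kronecker proof (Proposition 3.3): induct on $k$, and show that the partition function $Z_k := \sum_P \mathrm{weight}(P)$ over partitions $P$ of $AD_k$ satisfies the same recurrence, with the same base cases, as $F_k$. This splits the theorem into two largely independent tasks: (a) derive a recurrence for $F_k$ directly from the quiver structure, and (b) prove that the same recurrence holds on the combinatorial side.

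For (a), I would first establish by induction that $Q_k$ (in the odd case we have restricted to) has a predictable periodic form, analogous to Figure \ref{fig:kroneckermutation}, tracking the number of arrows between $0,1$ and the frozen vertices $0',1'$ at each step. The quiver $Q_k$ has a fixed shape up to the multiplicities on the frozen arrows, which grow linearly in $k$ in exactly the pattern encoded by $C_k$. Once the shape of $Q_k$ is pinned down, the cluster mutation formula for the label at the mutated vertex immediately gives the F-polynomial recurrence. The table in the excerpt already supplies the check $F_2 = (1+y_0^2 y_1)F_1^2$, pointing to a recurrence of the schematic form
\[
F_k F_{k-2} \;=\; F_{k-1}^2 \;+\; y_0^{k}y_1^{k-1}\cdot(\text{product of earlier } F),
\]
whose precise shape is dictated by the exponents appearing on the two sides of the mutation formula at step $k$.

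For (b), after checking $Z_0 = 1$ and $Z_1 = y_0+1$, I would prove the same recurrence combinatorially by constructing a weight-preserving decomposition of partitions of $AD_k$ in terms of partitions of $AD_{k-1}$ and $AD_{k-2}$. The natural way is to condition on whether a distinguished apex white stone (and the forced cascade of removals it triggers) has been removed: partitions in which it is untouched restrict to a pair of partitions of smaller diamonds contributing the $F_{k-1}^2$ term, while partitions in which it is removed force the removal of exactly $k$ additional white and $k{-}1$ additional black stones, producing the $y_0^k y_1^{k-1}$ factor times the remaining generating function. This is precisely the content of the domino shuffling argument of Elkies--Kuperberg--Larsen--Propp once pyramid partitions of $AD_k$ are identified with domino tilings (equivalently, perfect matchings) of the Aztec diamond of order $k$, under which stone weights translate to edge weights.

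The main obstacle is step (b): matching the F-polynomial recurrence term-by-term on weights, because one must correctly handle the two-layer interleaving of white and black stones and the cascade of forced removals. Unlike the row pyramid, where stability under removal is almost trivial, in $AD_k$ each black stone has four white stones sitting above it and each interior white stone has two black stones beneath, so the forced-removal structure is nontrivial. The cleanest resolution is to go through the domino tiling bijection and invoke \cite{eklp} directly, rather than constructing the weighted bijection on pyramid partitions by hand.
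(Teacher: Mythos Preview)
The paper does not give an independent proof of this theorem: its entire argument is the one-line citation ``Proven in \cite{eklp}, using an interpretation by perfect matchings of graphs which is equivalent to our interpretation in terms of partitions of pyramids.'' Your final paragraph arrives at the same place, so at the level of what is actually established your proposal and the paper agree.

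Where your proposal diverges is in the combinatorial decomposition you sketch in (b). The conditioning-on-an-apex-stone argument does not work for $AD_k$ the way it does for row pyramids. There is no single white stone whose non-removal causes a partition of $AD_k$ to factor as a pair of partitions of $AD_{k-1}$ (giving the $F_{k-1}^2$ term), nor does removing any single stone force the removal of exactly $k$ white and $k{-}1$ black stones. The octahedron-type recurrence here, which from your own check $F_2 F_0 = (1+y_0^2 y_1)F_1^2$ has the shape $Z_k Z_{k-2} = (1+y_0^{a_k}y_1^{b_k})\,Z_{k-1}^2$, is not witnessed by a local stone removal at all: on the tiling side it is realized by domino shuffling (equivalently, Kuo condensation), which is a global involution moving every domino simultaneously and has no analogue as ``remove one stone and see what cascades.'' You correctly flag this as the main obstacle and then defer to \cite{eklp}; that deferral is exactly what the paper does, but the intermediate heuristic you describe should be dropped, since it points toward a bijection that does not exist.
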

\begin{proof}
Proven in \cite{eklp}, using an interpretation by perfect matchings of graphs which is equivalent to our interpretation in terms of partitions of pyramids. 
\end{proof}

\subsection{Proof of Stabilization}

Note that each $AD_k$ can be decomposed into layers of row pyramids (Definition \ref{defn:rowpyramid}), such that the $j$th layer from the top contains $j$ row pyramids of length $k-j+1$. We will frequently refer to a row pyramid in the decomposition simply as a \textbf{row} of $AD_k$.

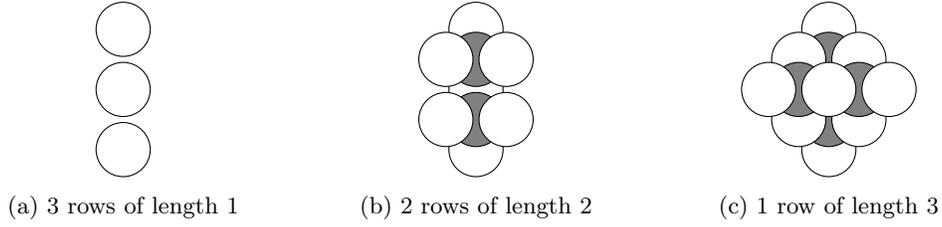
\begin{figure}[!htb]
    \centering
    
    \begin{subfigure}[b]{0.3\textwidth}
        \centering
        \begin{tikzpicture}[scale = 0.8]
        \foreach \x in {0} \foreach \y in {-1, 0, 1}
        \draw[fill = white] (\x, \y) circle (4.5mm);
    \end{tikzpicture}
        \caption{3 rows of length 1}
    \end{subfigure}
    \begin{subfigure}[b]{0.3\textwidth}
    \centering
        \begin{tikzpicture}[scale = 0.8]
        \foreach \x in {0} \foreach \y in {-1, 0, 1}
        \draw[fill = white] (\x, \y) circle (4.5mm);
        
        \foreach \x in {0} \foreach \y in {-0.5, 0.5}
        \draw[fill = gray] (\x, \y) circle (4.5mm);
        
        \foreach \x in {-0.5,0.5} \foreach \y in {-0.5, 0.5}
        \draw[fill = white] (\x, \y) circle (4.5mm);
    \end{tikzpicture}
        \caption{2 rows of length 2}
    \end{subfigure}
     \begin{subfigure}[b]{0.3\textwidth}
     \centering
        \begin{tikzpicture}[scale = 0.8]
        \foreach \x in {0} \foreach \y in {-1, 0, 1}
        \draw[fill = white] (\x, \y) circle (4.5mm);
        
        \foreach \x in {0} \foreach \y in {-0.5, 0.5}
        \draw[fill = gray] (\x, \y) circle (4.5mm);
        
        \foreach \x in {-0.5,0.5} \foreach \y in {-0.5, 0.5}
        \draw[fill = white] (\x, \y) circle (4.5mm);
        
        \foreach \x in {-0.5, 0.5} \foreach \y in {0}
        \draw[fill = gray] (\x, \y) circle (4.5mm);
        \foreach \x in {-1,0,1} \foreach \y in {0}
        \draw[fill = white] (\x, \y) circle (4.5mm);
    \end{tikzpicture}
        \caption{1 row of length 3}
    \end{subfigure}
    \caption{Row pyramid decomposition of $AD_3$, shown layer by layer.}
\end{figure}

A \textbf{simple partition} of $AD_k$ is a partition such that the restriction of the partition to each row pyramid is simple. (See Definition \ref{defn:simplepartitionrowpyramid}). For any partition $P$ of $AD_k$, for any row $r$ of $AD_k$, we call $r$ \textbf{altered} if at least one stone is removed from $r$.

Example \ref{example:A4partition} above is a simple partition with 2 altered rows. 

Analogous to the case of the Kronecker quiver in Section 3, the idea of the next proof is that the stable terms in $\tilde{F}_k$ are contributed by the simple partitions. 

\begin{theorem}
For the conifold, $\lim_{k\to \infty} \tilde{F}_k$ converges as a formal power series. 
\end{theorem}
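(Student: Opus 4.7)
The plan is to follow the Kronecker argument: show that for each fixed target monomial $\tilde{m} = y_0^{\alpha}y_1^{\beta}$, its coefficient in $\tilde{F}_k$ is eventually constant in $k$, and verify this by proving that only simple partitions of $AD_k$ contribute in the limit. First I would compute the transformation rule. Since the conifold $C$-matrix has the same form as in the Kronecker case, the same calculation gives $\widetilde{y_0^a y_1^b} = y_0^{k(a-b)-b}y_1^{k(a-b)-a}$. Setting $j := \alpha - \beta$, the unique monomial in $F_k$ that could transform to $\tilde{m}$ is $y_0^{kj-\beta}y_1^{kj-\alpha}$, so the coefficient of $\tilde{m}$ in $\tilde{F}_k$ equals the number of partitions of $AD_k$ with exactly $kj-\beta$ whites and $kj-\alpha$ blacks removed.

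Next I would establish the relevant inequality: using the row pyramid decomposition, in any altered row the number of removed whites exceeds the number of removed blacks by at least one, with equality exactly when the row restriction is simple. Summing over rows gives $a-b \geq (\text{number of altered rows}) \geq 0$. Thus $j < 0$ contributes nothing, $j = 0$ forces the trivial partition (giving the constant term $1$), and the remaining work is for $j \geq 1$.

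For $j \geq 1$ I would argue in two steps. First, show that for $k$ large every contributing partition has exactly $j$ altered rows, all simple. Otherwise some altered row is non-simple, but a non-simple row contributes a strictly bounded excess $w_r - b_r$ (depending only on the configuration of exposed blacks left behind); since $a = kj - \beta$ grows linearly in $k$, a non-simple row would need to contain $\Omega(k)$ removed whites while still respecting that bounded excess, and a direct counting argument rules these out for $k$ large. Second, parametrize simple partitions with $j$ altered rows. If the altered rows lie in layers $L_1, \ldots, L_j$ with white-removal counts $w_i$ (so row $i$ has length $k - L_i + 1$), the identity $\sum (k - L_i + 1 - w_i) = \beta - \sum (L_i - 1)$ forces the layer configuration to lie in the $k$-independent finite set $\{(L_1,\ldots,L_j) : L_i \geq 1,\ \sum (L_i - 1) \leq \beta\}$, and the remaining data---how many whites stay on each side of each removed block, subject to 3D stability between layers---also becomes a $k$-independent enumeration once $k$ exceeds a threshold depending only on $\alpha$.

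The main obstacle is the $k$-independence in this last step: I need to pin down which subsets of rows across different layers can be simultaneously altered in $AD_k$, and how the positions of removed blocks in different rows constrain one another via the 3D stability between layers. Unlike the Kronecker case, where each partition lives within a single row pyramid with transparent structure, the layer-to-layer interactions in $AD_k$ make the combinatorics genuinely multi-dimensional. The cleanest resolution is likely a bijection between contributing simple partitions of $AD_k$ for large $k$ and simple partitions of an infinite Aztec diamond pyramid $AD_\infty$ with bounded complement; setting up this bijection and verifying that it preserves the weight is where the bulk of the technical work lies.
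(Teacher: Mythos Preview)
Your overall strategy matches the paper's: reduce to simple partitions, then show their count stabilizes. Two places where your execution differs from the paper, one a real slip and one an overcomplication.

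\textbf{First step.} Your sentence ``a non-simple row contributes a strictly bounded excess $w_r - b_r$'' is false as stated: in a single row you may remove many isolated white stones and no blacks, making the excess as large as you like. The paper's argument is different and simpler. Fix the set $S$ of altered rows with $|S|<j$. Passing from $AD_z$ to $AD_{z+1}$ adds exactly one white stone to each row of $S$, so the total number of white stones available in those rows grows by $|S|$ per step, whereas the required white-count $zj-\beta$ grows by $j$ per step; since $|S|<j$, the requirement eventually exceeds capacity. Equivalently, the rows in $S$ contain at most $|S|\cdot k < jk$ white stones in total, but you need $kj-\beta$ removed, impossible once $k>\beta$. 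The paper also records the slightly stronger statement that, for large $k$, each altered row in a contributing partition has more than one stone removed; this is what makes the map below invertible.

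\textbf{Second step.} You correctly flag the inter-layer stability constraints as the crux, but the paper sidesteps them entirely with a direct bijection $\phi$ from contributing simple partitions of $AD_k$ to those of $AD_{k+1}$: in each altered row, remove one additional white and one additional black so that the two \emph{end sections} (the configurations of unremoved stones on either side of the gap) are preserved verbatim. Since 3D stability is a condition on which stones remain, and $\phi$ leaves the set of remaining stones literally unchanged, stability transfers automatically in both directions; there is nothing to check layer-by-layer. This is more direct than your proposed passage through $AD_\infty$, although the paper does introduce $AD_\infty$ afterward for the combinatorial interpretation of the limit, so your instinct there is not misplaced.
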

\begin{proof}
The term 1 clearly stabilizes, since each $F_k$ includes 1 as a term, coming from the trivial partition with no stones removed. The linear transformation $C_k$ leaves the term unchanged. For the same reason as in the proof of Theorem \ref{thm:kronecker}, for every monomial $\tilde{m} = y_0^ay_1^b \neq 1$, if $\tilde{m}$ appears in $\tilde{F}_k$ for any $k$, then we must have $a > b$. 

\underline{Claim}: Let $\tilde{m} = y_0^ay_1^{a-j}$, with $j \geq 1$. For sufficiently large $k$, the terms in $F_k$ transforming to $\tilde{m}$ come only from simple partitions (possibly none). 

Suppose there is a $z$ such that there is a partition $P$ of $AD_z$ with weight $m_1$ transforming to $\tilde{m}$. Then it must be that $m_1 = y_0^{zj - a + j}y_1^{zj - a}$. In $P$, $j$ is the difference between the number of white stones and black stones removed. Let $S$ be the set of rows altered by $P$. Note that $P$ is a simple partition iff $|S| = j$, and $P$ is a non-simple partition iff $|S| < j$. 

Suppose $|S| < j$. 

If $F_{z+1}$ has a term transforming to $\tilde{m}$, it must be $m_2 = y_0^{zj - a + 2j}y_1^{zj - a + j}$. In other words, each exponent increases by $j$ from $m_1$. But increasing from $z$ to $z+1$ adds only one stone of each color to each row. So if $j > |S|$, then after a finite number of steps it will be impossible for any partition \textbf{altering exactly the rows in $\boldsymbol{S}$} to have a weight transforming to $\tilde{m}$. Since this is true for any set $S$ of fewer than $k$ rows, eventually the only possible partitions with weight transforming to $\tilde{m}$ will be simple partitions. 

This proof easily be can be modified to show that the following stronger claim holds: For sufficiently large $k$, the terms in $F_k$ transforming to $\tilde{m}$ come only from simple partitions, such that each altered row of the partition has more than $w$ stones removed, for any fixed $w$. 

\underline{Claim}: For sufficiently large $k$, the coefficient in front of $\tilde{m}$ in $\tilde{F}_k$ is constant. 

Assume $k$ is large enough that all partitions with weight transforming to $\tilde{m}$ are simple with each altered row having strictly greater than 1 stone removed, and that $k\geq j$. The second condition guarantees that $AD_k$ is large enough for every possible set of $j$ altered rows to exist. 

We construct a bijection $\phi$ between partitions of $AD_k$ with weight transforming to $\tilde{m}$ and partitions of $AD_{k+1}$ with weight transforming to $\tilde{m}$. 

Let $P$ be such a partition of $AD_k$. Since $P$ is simple, then for each altered row $r$, $P$ divides the unremoved stones in $r$ into two end sections, separated by the block of removed stones. (With each end section possibly empty). Increasing from $k$ to $k+1$ adds one stone of each color to each row. To get $\phi(P)$ we simply remove from each altered row one more stone of each color, such that the configuration of each end section is preserved. Since we removed $j$ additional stones of each color in total, $\phi(P)$ has weight transforming $\tilde{m}$, so the map is well-defined. (An example of $\phi$ is shown after the end of the proof). 

$\phi$ is bijective, since the inverse map is obvious (add one stone of each color such that the end section configurations are preserved) and well-defined. Well-definedness follows from the condition that each altered row has strictly greater than 1 stone removed, so the addition of stones still leaves the row an altered row. 
\end{proof}

\begin{example} 
$\phi(P)$ where $P$ is a simple partition of $AD_3$.
\begin{center}
$P$: \begin{tikzpicture}[scale = 0.8]
        \foreach \x in {0} \foreach \y in {-1, 0, 1}
        \draw[fill = white] (\x, \y) circle (4.5mm);
        
        \foreach \x in {0} \foreach \y in {-0.5, 0.5}
        \draw[fill = gray] (\x, \y) circle (4.5mm);
        
        \foreach \x in {-0.5} \foreach \y in {-0.5, 0.5}
        \draw[fill = white] (\x, \y) circle (4.5mm);
        \draw[fill = white] (0.5, -0.5) circle (4.5mm);
        
        \foreach \x in {-0.5} \foreach \y in {0}
        \draw[fill = gray] (\x, \y) circle (4.5mm);
        
        \foreach \x in {-1} \foreach \y in {0}
        \draw[fill = white] (\x, \y) circle (4.5mm);
    \end{tikzpicture}
    \hspace{5mm}
$\phi(P)$: \begin{tikzpicture}[scale = 0.8]
        \foreach \x in {0} \foreach \y in {-1.5, -0.5, 0.5, 1.5}
        \draw[fill = white] (\x, \y) circle (4.5mm);
        
        \foreach \x in {0} \foreach \y in {-1, 0, 1}
        \draw[fill = gray] (\x, \y) circle (4.5mm);
        
        \foreach \x in {-0.5,0.5} \foreach \y in {-1, 0, 1}
        \draw[fill = white] (\x, \y) circle (4.5mm);
        
        \foreach \x in {-0.5} \foreach \y in {-0.5, 0.5}
        \draw[fill = gray] (\x, \y) circle (4.5mm);
        \draw[fill = gray] (0.5, -0.5) circle (4.5mm);
        
        \foreach \x in {-1} \foreach \y in {-0.5, 0.5}
        \draw[fill = white] (\x, \y) circle (4.5mm);
        \draw[fill = white] (0, -0.5) circle (4.5mm);
        \draw[fill = white] (1, -0.5) circle (4.5mm);
        
        \foreach \x in {-1} \foreach \y in {0}
        \draw[fill = gray] (\x, \y) circle (4.5mm);
        
        \foreach \x in {-1.5} \foreach \y in {0}
        \draw[fill = white] (\x, \y) circle (4.5mm);
    \end{tikzpicture}
\end{center}
\end{example}

\subsection{Combinatorial Interpretation of the Limit}

\begin{definition}
Let $AD_\infty$ be the following Aztec diamond pyramid extending infinitely vertically and toward the middle:

\begin{center}
\begin{tikzpicture}[scale = 0.8]
                
        \foreach \x in {-1,1}
            \foreach \y in {2.5, 1.5, 0.5, -0.5, -1.5, -2.5}
                \draw[fill = white] (\x, \y) circle (4.5mm);
                
        \foreach \x in {-1,1}
            \foreach \y in {2, 1, 0, -1,-2}
                \draw[fill = gray] (\x, \y) circle (4.5mm);
                
        \foreach \x in {-1.5, 1.5}
            \foreach \y in {2, 1, 0, -1, -2}
                \draw[fill = white] (\x, \y) circle (4.5mm);

        \foreach \x in {-1.5, 1.5}
            \foreach \y in {-1.5, -0.5, 0.5, 1.5}
                \draw[fill = gray] (\x, \y) circle (4.5mm);
                
        \foreach \x in {-2,-1,1,2}
            \foreach \y in {-1.5, -0.5, 0.5, 1.5}
                \draw[fill = white] (\x, \y) circle (4.5mm);
                
        \foreach \x in {-2, -1, 1, 2}
            \foreach \y in {-1, 0, 1}
                \draw[fill = gray] (\x, \y) circle (4.5mm);
                
        \foreach \x in {-2.5,-1.5,1.5,2.5}
            \foreach \y in {-1, 0, 1}
                \draw[fill = white] (\x, \y) circle (4.5mm);
        
        \foreach \x in {-2.5, -1.5, 1.5, 2.5}
            \foreach \y in {-0.5, 0.5}
                \draw[fill = gray] (\x, \y) circle (4.5mm);
                
        \foreach \x in {-3, -2,-1,1,2, 3}
            \foreach \y in {-0.5, 0.5}
                \draw[fill = white] (\x, \y) circle (4.5mm);
    
        \foreach \x in {-3, -2, -1, 1, 2, 3}
            \foreach \y in {0}
                \draw[fill = gray] (\x, \y) circle (4.5mm);
                
        \foreach \x in {-3.5, -2.5,-1.5,1.5,2.5, 3.5}
            \foreach \y in {0}
                \draw[fill = white] (\x, \y) circle (4.5mm); 
                
        \draw[color = white, fill = white] (-.96,-3) rectangle (.96, 3);
        \node () at (0,0) {\ldots};
                
\end{tikzpicture}
\end{center}

\end{definition}

\begin{definitions}
    \item A \textbf{partition} of $AD_\infty$ is a stable configuration achieved by removing stones from $AD_\infty$, such that for each row in its decomposition, \textbf{either no stones are removed, or an infinite number of stones are removed such that only a finite number of stones remains}. 
    
    \item A \textbf{simple partition} of $AD_\infty$ is a partition of $AD_\infty$ such that the restriction of the partition to each row is simple. 
    
    \item For any row $r$ of $AD_k$ or $AD_\infty$, define its \textbf{height} $h(r)$ as its distance from the top layer, such that the height of the top row is 0. Note that when $k$ is finite, $h(r) = k - $ (\# of white stones in $r$).
    
    \item For any partition $P$ of $AD_k$ or $AD_\infty$, define its \textbf{height} $$h(P) = \sum_{\text{altered rows $r$ of $P$}} h(r)$$
    
    \item For any partition $P$ of $AD_k$ or $AD_\infty$, let $$x(P) = \sum_{\text{altered rows $r$ of $P$}} (\text{\# non-removed white stones in $r$})$$
    Equivalently, $$x(P) = \sum_{\text{altered rows $r$ of $P$}} (\text{\# non-removed black stones in $r$})$$
\end{definitions}

\begin{definition}
    Define a partition function $$T = \sum_{P \text{ a simple partition of } AD_\infty} y_0^{x(P) + h(P) + \text{\# altered rows}}y_1^{x(P) + h(P)}$$
    \label{defn:T}
\end{definition}

\begin{proposition}
For the conifold $$\lim_{k \to \infty} \tilde{F}_k = T$$
\label{prop:T}
\end{proposition}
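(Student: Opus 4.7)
The plan is to match, coefficient-by-coefficient, the stable limit of $\tilde{F}_k$ with $T$. The preceding stabilization theorem already provides, for any target monomial $\tilde{m} = y_0^A y_1^B$, a $K$ such that for all $k \geq K$ only simple partitions of $AD_k$ contribute to $\tilde{m}$, and moreover each altered row has arbitrarily many stones removed. So the task reduces to (i) expressing the $C_k^{-1}$-transformed weight of a simple partition $P$ in terms of the intrinsic quantities $h(P)$, $x(P)$, and $j := \#$ altered rows, and (ii) setting up a bijection between such simple partitions of $AD_k$ and simple partitions of $AD_\infty$ contributing to $\tilde{m}$ under the weight rule of Definition~\ref{defn:T}.

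For (i), I plan a direct computation. In a simple partition $P$ with altered rows $r_1,\dots,r_j$ of heights $h_1,\dots,h_j$, each altered row removes one more white stone than black, so $b = a - j$, where $a,b$ are the totals. Writing $w_i$ for whites removed from $r_i$ and using $w_i + x_{r_i} = \ell_{r_i} = k - h_i$ yields $a = kj - h(P) - x(P)$. The explicit $C_k^{-1}$ gives $\tilde{m} = y_0^{k(a-b)-b}\,y_1^{k(a-b)-a}$, and substituting $a - b = j$ and $a = kj - h(P) - x(P)$ simplifies this to
$$\tilde{m} = y_0^{\,j + h(P) + x(P)}\,y_1^{\,h(P) + x(P)},$$
which is precisely the weight attached to $P$ in Definition~\ref{defn:T}.

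For (ii), both a simple partition of $AD_k$ and a simple partition of $AD_\infty$ are determined by the same combinatorial data: a finite collection of altered rows, each specified by a height $h$ and a choice of one of the $h+1$ rows at that height, together with a pair of nonnegative end-section sizes per altered row. The bijection $\psi_k$ sends a simple partition of $AD_k$ to the simple partition of $AD_\infty$ with matching rows and end-section sizes (and conversely, replacing the infinite middle block of removed whites by a finite one of length $k - h - \ell - \rho$). Since both $h(P)$ and $x(P)$ are bounded by $B$ for any $P$ contributing to $\tilde{m}$, this map is well-defined and bijective once $k$ is large. It is essentially the iteration of the map $\phi$ constructed in the proof of the preceding theorem. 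Combining with (i), the coefficient of $\tilde{m}$ in $\tilde{F}_k$ equals the coefficient of $\tilde{m}$ in $T$ for all $k \geq K$, proving the claim.

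The main obstacle I anticipate is the careful bookkeeping in step (i), particularly tracking the ``$+\,j$'' offset in the $y_0$-exponent and correctly identifying the end-section sizes with the summand $x(P)$; and in step (ii) handling the case where multiple altered rows share a height (so ``position within layer'' must be an explicit piece of the bijection data on both sides). A pleasant consequence, promised by the remark at the end of the previous section, is that the trivial partition ($j = 0$, no altered rows) contributes $y_0^0 y_1^0 = 1$ to $T$, so no separate constant term is needed here, unlike in the Kronecker case.
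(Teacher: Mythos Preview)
Your proposal is correct and follows essentially the same approach as the paper: the core computation in your step~(i)---writing $a = kj - h(P) - x(P)$ and substituting into $\tilde{m} = y_0^{k(a-b)-b}y_1^{k(a-b)-a}$ to obtain $y_0^{j+h(P)+x(P)}y_1^{h(P)+x(P)}$---is exactly what the paper does. The paper leaves your step~(ii) implicit (treating the passage from simple partitions of $AD_k$ with large $k$ to simple partitions of $AD_\infty$ as immediate from the definitions and the map $\phi$ of the preceding theorem), so your explicit bijection $\psi_k$ is a welcome elaboration rather than a departure.
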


\begin{proof}
We show that if $P$ is a simple partition of $AD_k$ for finite $n$, and $m \neq 1$ is its weight, then $m$ transforms to $\tilde{m} = y_0^{x(P) + h(P) + \text{\# altered rows}}y_1^{x(P) + h(P)}$. 

Let $m = y_0^ay_1^{a-j}$. Then $m$ transforms to $\tilde{m} = y_0^{nj - a + j}y_1^{nj - a}$. Note that $j = \#$ altered rows. Also observe that $kj = h(P) +  \sum_{\text{altered rows $r$ of $P$}} \text{ length of $r$ }$. (Where length of $r = \#$ white stones in $r$ before any stones are removed). Hence $kj - a = h(P) + x(P)$. 
\end{proof}

\begin{remark}
Comparing Definition \ref{defn:T} to Definition \ref{defn:S} from the previous section reveals that the two are indeed analogous. In the case of the previous section, $h(P)$ is always 0, and the number of altered rows is always 1.
\end{remark}

\begin{remark}
Proposition \ref{prop:T} and Proposition \ref{prop:S} from the previous section appear not to be analogous, due to an additional "$+1$" constant term in the earlier result. However, this appearance is false. The "$+1$" term is hidden in the expression in Proposition \ref{prop:T}, coming from the trivial partition that removes no stones ).

However, after having seen the conifold quiver, there is now a natural way to revise Section 3 in order to embed the constant +1 term, by viewing diamond pyramids as a generalization of row pyramids, and then restricting back down from the generalized case. Replace all the definitions regarding partitions of $R_\infty$ with the definitions regarding partitions of $A_\infty$ (i.e. transport the definitions of altered rows, simple partitions, height, $x(P)$, and the partition function to the $R_\infty$ case). Then removing no stones at all from $R_\infty$ would be considered a simple partition. It has 0 altered rows and 0 height, hence corresponds to the term +1. 
\end{remark}

\section{$F_0$ Quiver}
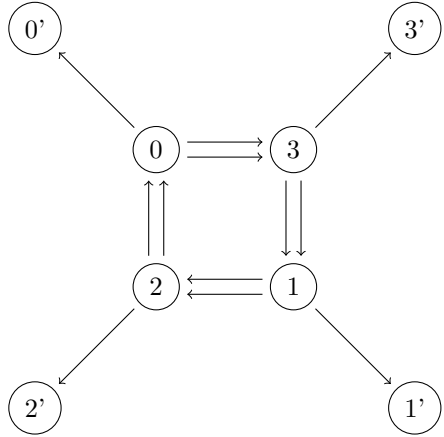
\begin{figure}[!htb]
    \centering
    \begin{tikzpicture}[round/.style={circle, draw, outer sep= 3},]
        \node[round] (0) [] {0};
        \node[round] (3) [right=of 0] {3};
        \node[round] (2) [below = of 0] {2};
        \node[round] (1) [below = of 3] {1};
        
        \node[round] (0') [above left=of 0] {0'};
        \node[round] (1') [below right=of 1] {1'};
        \node[round] (2') [below left=of 2] {2'};
        \node[round] (3') [above right=of 3] {3'};

        \draw[transform canvas={xshift=-1mm},->] (2) to (0);
        \draw[transform canvas={xshift=1mm},->] (2) to (0);
        
        \draw[transform canvas={yshift=-1mm},->] (1) to (2);
        \draw[transform canvas={yshift=1mm},->] (1) to (2);
        
        \draw[transform canvas={xshift=-1mm},->] (3) to (1);
        \draw[transform canvas={xshift=1mm},->] (3) to (1);
        
        \draw[transform canvas={yshift=-1mm},->] (0) to (3);
        \draw[transform canvas={yshift=1mm},->] (0) to (3);
        
        \draw[->] (0) to (0');
        \draw[->] (1) to (1');
        \draw[->] (2) to (2');
        \draw[->] (3) to (3');
    \end{tikzpicture}
    \caption{Framed $F_0$ quiver. $\mu = 01230123 \ldots$}
    \label{fig:F0}
\end{figure}

Next, we investigate the $F_0$ quiver, whose framed quiver is shown in Figure \ref{fig:F0}. We consider this example with respect to the mutation sequence $\mu = (0,1,2,3,0,1,2,3,\ldots)$. 

Here the cluster variables appear to converge, but this time in two phases. That is, the even-indexed cluster variables appear to converge to one limit, and the odd-indexed cluster variables appear to converge to another limit. Currently, we have only completed an analysis of the even-indexed cluster variables, whose limit generalizes the conifold case further. An explanation for the odd-indexed cluster variables, or the existence of this double sequence, has not yet been investigated. 

\par A table of the odd-indexed cluster variables. (Entries in the last two rows are truncated).

\begin{center}
\begin{tabular}{|c|r|r|}
\hline
$k$ & $F_k$ & $\tilde{F}_k$\\
\hline
1 & $y_0 + 1$ & $y_0 + 1$\\

3 & $y_0^2y_1^2y_2 + 2y_0^2y_1y_2 + y_0^2y_2 + y_0^2 + 2y_0 + 1$ & $y_0^2y_2^4 + y_1^2y_2 + 2y_0y_2^2 + 2y_1y_2 + y_2 + 1$\\

5 & $\ldots + 4y_0^2y_1y_2 + y_0^3 + 2y_0^2y_2 +
3y_0^2 + 3y_0 + 1$ & $\ldots + 4y_0y_1y_3^2 + y_0y_3^2 + 2y_0^2y_2 + 2y_0y_3 + y_0 + 1$\\

7 & $\ldots +
6y_0^2y_1y_2 + 4y_0^3 + 3y_0^2y_2 + 6y_0^2 + 4y_0 + 1$ & $\ldots 
+ 4y_1^2y_2y_3 + y_1^2y_2 + 2y_0y_2^2 + 2y_1y_2 + y_2 + 1$\\
\hline
\end{tabular}\\
\end{center}
\vspace{4.5mm}

\par A table of the even-indexed cluster variables. (Entries in the last two rows are truncated)
\begin{center}
\begin{tabular}{|c|r|r|}
\hline
$k$ & $F_k$ & $\tilde{F}_k$\\
\hline
2 & $y_1 + 1$ & $y_1 + 1$\\

4 & $y_0^2y_1^2y_3 + 2y_0y_1^2y_3 + y_1^2y_3 + y_1^2 + 2y_1 + 1$ & $y_0^2y_2^4y_3 + y_1^2y_3^4 + 2y_0y_2^2y_3 + 2y_1y_3^2 + y_3 + 1$\\

6 & $\ldots + 4y_0y_1^2y_3 + y_1^3 + 2y_1^2y_3 +
3y_1^2 + 3y_1 + 1$ & $\ldots + 4y_0^3y_1y_2^2 +
3y_1^3y_3^2 + 2y_0^2y_1y_2 + 2y_1^2y_3 + y_1 + 1 $\\

8 & $\ldots +
6y_0y_1^2y_3 + 4y_1^3 + 3y_1^2y_3 + 6y_1^2 + 4y_1 + 1$ & $\ldots + 4y_0^2y_2^3y_3 + 3y_1^2y_3^3 + 2y_0y_2^2y_3 + 2y_1y_3^2 + y_3 + 1 $\\
\hline
\end{tabular}\\
\end{center}
\vspace{4.5mm}

For the remainder of the discussion, we consider only the even-indexed cluster variables, and we re-index them from $F_2, F_4, F_6, \ldots$ to $F_1, F_2, F_3, \ldots$ in order to simplify notation.

\par Here is a larger number of stable terms. By identifying pairs of variables $y_0 = y_1$, $y_2 = y_3$, these terms collapse down to the conifold case. 

\begin{multline*}
\ldots + 6y_1^6y_3^5 + 4y_0^2y_1^5y_2y_3^3 + 10y_0^6y_1y_2^4 + 8y_0^4 y_1^2y_2^3y_3+ 8y_0^5y_1y_2^4 + 5y_1^5y_3^4\\ + 2y_0^2y_1^4y_2y_3^2 + 4y_0^5y_1y_2^3 + 4y_0^3y_1^2y_2^2y_3 + 6y_0^4y_1y_2^3 + 4y_1^4y_3^3 + y_0^4y_1y_2^2\\ + 4y_0^3y_1y_2^2 + 3y_1^3y_3^2 + 2y_0^2y_1y_2 + 2y_1^2y_3 + y_1 + 1
\end{multline*}

\subsection{4-color Aztec Diamond Pyramids}
The F-polynomials are, once again, partition functions of pyramids. 

\begin{definition}
Let $AD^{(4)}_k$ be the following 4-color Aztec diamond pyramid with $k$ white stones on the top layer. The next three layers down consist of black stones, yellow stones, and blue stones, respectively. 
\begin{center}
$AD^{(4)}_1$ \begin{tikzpicture}[scale = 0.8]
        \foreach \x in {}
        \draw[fill = black] (\x, 0) circle (4.5mm);
        \foreach \x in {0}
        \draw[fill = white] (\x, 0) circle (4.5mm);
\end{tikzpicture}
\hspace{4.5mm}
$AD^{(4)}_2$ \begin{tikzpicture}[scale = 0.8]
        \foreach \x in {0} \foreach \y in {-0.5, 0.5}
        \draw[fill = yellow] (\x, \y) circle (4.5mm);
        \foreach \x in {0} \foreach \y in {0}
        \draw[fill = black] (\x, \y) circle (4.5mm);
        \foreach \x in {-0.5,0.5} \foreach \y in {0}
        \draw[fill = white] (\x, \y) circle (4.5mm);
\end{tikzpicture}
\hspace{4.5mm}
$AD^{(4)}_3$ \begin{tikzpicture}[scale = 0.8]
        \foreach \x in {0} \foreach \y in {-1, 0, 1}
        \draw[fill = white] (\x, \y) circle (4.5mm);
        
        \foreach \x in {0} \foreach \y in {-0.5, 0.5}
        \draw[fill = cyan] (\x, \y) circle (4.5mm);
        
        \foreach \x in {-0.5,0.5} \foreach \y in {-0.5, 0.5}
        \draw[fill = yellow] (\x, \y) circle (4.5mm);
        
        \foreach \x in {-0.5, 0.5} \foreach \y in {0}
        \draw[fill = black] (\x, \y) circle (4.5mm);
        \foreach \x in {-1,0,1} \foreach \y in {0}
        \draw[fill = white] (\x, \y) circle (4.5mm);
\end{tikzpicture}
\hspace{4.5mm}
$AD^{(4)}_4$ \begin{tikzpicture}[scale = 0.8]
        \foreach \x in {0} \foreach \y in {-1.5, -0.5, 0.5, 1.5}
        \draw[fill = yellow] (\x, \y) circle (4.5mm);
        
        \foreach \x in {0} \foreach \y in {-1, 0, 1}
        \draw[fill = black] (\x, \y) circle (4.5mm);
        
        \foreach \x in {-0.5,0.5} \foreach \y in {-1, 0, 1}
        \draw[fill = white] (\x, \y) circle (4.5mm);
        
        \foreach \x in {-0.5, 0.5} \foreach \y in {-0.5, 0.5}
        \draw[fill = cyan] (\x, \y) circle (4.5mm);
        
        \foreach \x in {-1,0,1} \foreach \y in {-0.5, 0.5}
        \draw[fill = yellow] (\x, \y) circle (4.5mm);
        
        \foreach \x in {-1, 0, 1} \foreach \y in {0}
        \draw[fill = black] (\x, \y) circle (4.5mm);
        
        \foreach \x in {-1.5, -0.5, 0.5, 1.5} \foreach \y in {0}
        \draw[fill = white] (\x, \y) circle (4.5mm);
    \end{tikzpicture}

\end{center}

\end{definition}

We carry over the definition of a partition of $AD^{(4)}_k$ from previous sections unchanged. Now, for any partition $P$ of $AD^{(4)}_k$, its \textbf{weight} is
$$weight(P)= y_0^{\text{ \# yellow removed}}y_1^{\text{\# white removed}}y_2^{\text{ \# blue removed}}y_3^{\text{\# black removed}}$$

\begin{theorem}[Elkies-Kuperberg-Larsen-Propp, 1992]
The F-polynomials are partition functions of $AD^{(4)}_k$.
$$F_k = \sum_{\text{Partitions $P$ of } AD^{(4)}_{k}} weight(P)$$
\end{theorem}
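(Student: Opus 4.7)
The plan is to establish the theorem by induction on $k$, in the same spirit as the proofs used for the Kronecker and conifold quivers in the previous sections. The two tasks are (a) extract an explicit recurrence for $F_k$ from the cluster algebra side, and (b) show that the pyramid partition function $P_k := \sum_P \mathrm{weight}(P)$ of $AD^{(4)}_k$ obeys the same recurrence with matching initial data.

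For (a), I would track the mutated framed quivers $Q_k$ under the periodic mutation sequence $\mu = (0,1,2,3,0,1,2,3,\ldots)$. Because $\mu$ has period $4$ and the $F_0$ quiver has a $\mathbb{Z}/4$ rotational symmetry cycling the four unframed vertices, the quivers $Q_k$ should settle into a predictable pattern, exactly as the Kronecker and conifold quivers did in Figure \ref{fig:kroneckermutation}. Reading the exchange relation $\ell_k' \ell_k = \prod \ell_j + \prod \ell_j$ off this pattern at each step yields a closed-form recurrence for $F_k$ in terms of finitely many earlier $F_j$'s together with a monomial in $y_0, y_1, y_2, y_3$; the $C$-matrices fall out of the same analysis.

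For (b), the cleanest route is to reinterpret partitions of $AD^{(4)}_k$ as perfect matchings of the $k$-th Aztec diamond graph, with edges carrying weights in four classes corresponding to the four stone colors. This is the standard partition/dimer correspondence used in the pyramid-partition literature: a partition is encoded by the symmetric difference of a perfect matching with a fixed minimal matching, and the stability condition on removed stones corresponds to the matching having a well-defined height function. Under this translation, the recurrence from (a) becomes a Kuo-condensation-style identity on weighted perfect matchings of Aztec diamond graphs, which is essentially the content of the EKLP argument in \cite{eklp}. The induction then closes on verifying the small base cases $k=1,2$ directly.

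The main obstacle is bookkeeping rather than conceptual difficulty. In translating between the pyramid picture, the dimer picture, and the cluster picture, one must verify that the four stone colors yellow, white, blue, black of $AD^{(4)}_k$ line up with the four edge-weight classes of the Aztec diamond graph in such a way that the dimer partition function reproduces $F_k$ with \emph{exactly} the labelling $y_0, y_1, y_2, y_3$ dictated by the quiver in Figure \ref{fig:F0}. A convention error at this stage would permute the variables in the final formula and falsify the stated identity, even though the underlying unlabelled polynomial identity would still hold; so care is needed in fixing the fundamental domain, the vertex labelling, and the orientation of the brane tiling before the induction is run.
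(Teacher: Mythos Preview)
The paper does not supply a proof of this statement at all: it is recorded as a known result attributed to Elkies--Kuperberg--Larsen--Propp and left without a proof environment. (The parallel statement for the two-color Aztec diamond in the conifold section is handled the same way, with the one-line ``proof'' there being simply a pointer to \cite{eklp}.)

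Your proposal therefore goes well beyond what the paper does, sketching a genuine inductive argument: extract the exchange recurrence for $F_k$ from the periodic structure of the mutated quivers $Q_k$, reinterpret pyramid partitions as dimer covers of Aztec diamond graphs, and close the recurrence via a condensation identity in the EKLP/Kuo style. That plan is sound and is essentially how such identifications are established in the cluster-algebras-and-dimers literature; your warning about matching the four stone colors to the variables $y_0,\ldots,y_3$ is exactly the right place to flag care. The only point worth noting is historical rather than mathematical: the 1992 EKLP paper predates cluster algebras and $F$-polynomials, so what they literally proved is a statement about weighted perfect matchings of Aztec diamonds; the identification of that generating function with the $F$-polynomials of this particular quiver and mutation sequence comes from later work. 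Your sketch reflects the modern packaging rather than what is verbatim in \cite{eklp}, which is appropriate here but explains why the paper is content to cite rather than prove.
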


\subsection{Stabilization and Interpretation of the Limit}
\begin{theorem}
For the $F_0$ quiver, $\lim_{k\to \infty} \tilde{F}_k$ converges as a formal power series. 
\end{theorem}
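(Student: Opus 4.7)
The plan is to mimic the argument used for the conifold (Theorem 4.4), adapted to the 4-color Aztec diamond pyramid $AD^{(4)}_k$. First I would explicitly compute the C-matrix $C_k$ for the (re-indexed) even-step subsequence. As with the earlier two quivers, the framed quiver should evolve in a predictable periodic pattern, so $C_k$ can be pinned down by induction together with its inverse. One expects $C_k$ to have a block structure reflecting the pairing $y_0 \leftrightarrow y_1$, $y_2 \leftrightarrow y_3$ hinted at by the remark that the stable terms collapse to the conifold case under this identification.

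Next I would decompose $AD^{(4)}_k$ into row pyramids exactly as in the conifold case: the $j$-th layer from the top splits into $j$ rows of length $k-j+1$, except that the two colors that appear along each row depend on the row's vertical position (white over black, yellow over black, etc.). I would then extend the definition of a \textbf{simple partition} of $AD^{(4)}_k$ to mean one whose restriction to each constituent row pyramid is simple in the earlier two-color sense, and keep the definition of \textbf{altered row} unchanged.

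With these notions in hand, I would follow the two-claim structure of the conifold argument. First claim: for any fixed monomial $\tilde{m}$ and sufficiently large $k$, the partitions of $AD^{(4)}_k$ whose weight transforms to $\tilde{m}$ are necessarily simple, with each altered row having strictly more than one stone removed in each of its two colors. The input is again that incrementing $k$ by one adds only a single stone of each color per row, so any family of partitions altering a fixed set $S$ of rows with $|S|$ smaller than the relevant exponent deficit must eventually outgrow the pyramid. Second claim: once $k$ is sufficiently large, I would construct a weight-preserving bijection $\phi$ from simple partitions of $AD^{(4)}_k$ with weight transforming to $\tilde{m}$ to those of $AD^{(4)}_{k+1}$, by adding one stone of each of the two colors to every altered row while keeping the two end-section configurations fixed; the inverse removes one stone of each color and is well-defined precisely because every altered row has more than one stone removed. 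This bijectivity forces the coefficient of $\tilde{m}$ in $\tilde{F}_k$ to stabilize.

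The main obstacle I expect is the bookkeeping of the 4-color weight transformation. In the conifold case, a single deficit parameter $j$ (the difference $a-b$ between the two transformed exponents) controls everything, and the identity $kj - a = h(P) + x(P)$ cleanly ties the transformed exponent to a sum over altered rows. In the $F_0$ case there are up to four independent exponent deficits governed by the $4 \times 4$ inverse C-matrix, and one must check that the altered-row accounting matches the transformed exponents color-by-color and row-by-row. Concretely, I would stratify altered rows by their color pair, write the transformed weight as a sum of contributions indexed by these strata, and verify a four-variable analogue of the conifold identity. Once this accounting is in place the convergence argument runs exactly as in Section~4, and one obtains a combinatorial description of the limit as a partition function over simple partitions of an infinite 4-color diamond pyramid $AD^{(4)}_\infty$, generalizing Proposition~\ref{prop:T}.
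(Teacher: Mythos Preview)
Your proposal is correct and follows essentially the same approach as the paper, which itself omits details and simply states that the argument is identical to the conifold case (Theorem~4.6) except with four colors/variables to track. Your outline is in fact more detailed than the paper's own proof, and your stratification of altered rows by color pair anticipates exactly the $R_1$/$R_2$ decomposition the paper uses in the subsequent combinatorial interpretation.
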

\begin{proof}
Details are omitted, but the proof is essentially identical to that of Theorem 4.6, except that there are four variables or four colors to keep track of instead of two. Note that by identifying blue stones with black stones, and identifying yellow stones with white stones, we recover the two-color Aztec diamonds from the previous section. This corresponds to identifying pairs of variables $y_0 = y_1$, $y_2 = y_3$, which collapses polynomials down to those from the previous section. The $F_0$ quiver can hence be seen as an "unfolded" version of the conifold. 
\end{proof}

Analogous to the previous section, there is a combinatorial interpretation of the limit in terms of$AD_\infty^{(4)}$, the infinite 4-color Aztec diamond, which generalizes the expression from the conifold case. 

\begin{center}
\begin{tikzpicture}[scale = 0.8]
                
        \foreach \x in {-1,1}
            \foreach \y in {2.5, 1.5, 0.5, -0.5, -1.5, -2.5}
                \draw[fill = white] (\x, \y) circle (4.5mm);
                
        \foreach \x in {-1,1}
            \foreach \y in {2, 1, 0, -1,-2}
                \draw[fill = black] (\x, \y) circle (4.5mm);
                
        \foreach \x in {-1.5, 1.5}
            \foreach \y in {2, 1, 0, -1, -2}
                \draw[fill = yellow] (\x, \y) circle (4.5mm);

        \foreach \x in {-1.5, 1.5}
            \foreach \y in {-1.5, -0.5, 0.5, 1.5}
                \draw[fill = cyan] (\x, \y) circle (4.5mm);
                
        \foreach \x in {-2,-1,1,2}
            \foreach \y in {-1.5, -0.5, 0.5, 1.5}
                \draw[fill = white] (\x, \y) circle (4.5mm);
                
        \foreach \x in {-2, -1, 1, 2}
            \foreach \y in {-1, 0, 1}
                \draw[fill = black] (\x, \y) circle (4.5mm);
                
        \foreach \x in {-2.5,-1.5,1.5,2.5}
            \foreach \y in {-1, 0, 1}
                \draw[fill = yellow] (\x, \y) circle (4.5mm);
        
        \foreach \x in {-2.5, -1.5, 1.5, 2.5}
            \foreach \y in {-0.5, 0.5}
                \draw[fill = cyan] (\x, \y) circle (4.5mm);
                
        \foreach \x in {-3, -2,-1,1,2, 3}
            \foreach \y in {-0.5, 0.5}
                \draw[fill = white] (\x, \y) circle (4.5mm);
    
        \foreach \x in {-3, -2, -1, 1, 2, 3}
            \foreach \y in {0}
                \draw[fill = black] (\x, \y) circle (4.5mm);
                
        \foreach \x in {-3.5, -2.5,-1.5,1.5,2.5, 3.5}
            \foreach \y in {0}
                \draw[fill = yellow] (\x, \y) circle (4.5mm); 
                
        \draw[color = white, fill = white] (-.96,-3) rectangle (.96, 3);
        \node () at (0,0) {\ldots};
                
\end{tikzpicture}
\end{center}

\begin{definitions}
Let simple partitions be defined as before, and height $h(r)$ of a row $r$ be defined as before. Note that each row in the pyramid decomposition is either a black/white row or a blue/yellow row. For any simple partition $P$, let $R_1(P)$ be the set of altered black/white rows and $R_2(P)$ be the set of altered blue/yellow rows. Define \textbf{heights}:
$$h_1 = \sum_{r \in R_1} h(r)$$
$$h_2 = \sum_{r \in R_2} h(r)$$
Then let:
$$x_1 = \sum_{r \in R_1} (\# \text{non-removed white stones in } r)$$
$$= \sum_{r \in R_1} (\# \text{non-removed black stones in } r)$$
and:
$$x_2 = \sum_{r \in R_2} (\# \text{non-removed yellow stones in } r)$$
$$= \sum_{r \in R_2} (\# \text{non-removed blue stones in } r)$$
\end{definitions}

\begin{proposition}
For the $F_0$ quiver $$\lim_{k \to \infty} \tilde{F}_k = \sum_{\text{simple partitions of } AD_\infty^{(4)}} y_0^{x_2 + h_2 + \#R_2}y_1^{x_1 + h_1 + \#R_1}y_2^{x_2 + h_2}y_3^{x_1 + h_1}$$
\label{prop:T}
\end{proposition}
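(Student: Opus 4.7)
The plan is to follow the template of the analogous proposition from the conifold section. The stabilization theorem just proved for the $F_0$ quiver already guarantees that $\lim_{k \to \infty} \tilde{F}_k$ exists and, by its proof (the bijection $\phi$), shows that the stable contributions to $\tilde{F}_k$ come precisely from simple partitions of $AD^{(4)}_k$. It therefore remains to (i) determine the explicit form of $C_k^{-1}$ along the re-indexed even mutation subsequence and read off the corresponding transformation rule on monomials, and (ii) apply this rule to the weight of a generic simple partition $P$ of $AD^{(4)}_k$ and match the result to the claimed monomial.

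For step (i), I would establish a predictable form for $Q_k$ by induction on $k$, exactly as in the previous two sections, being careful to delete 2-cycles at each mutation step. Because the $F_0$ quiver is an ``unfolded'' version of the conifold (obtained by identifying $y_0 = y_1$ and $y_2 = y_3$), I expect the $4 \times 4$ matrix $C_k$ to split, at each even step, into two $2 \times 2$ blocks: one acting on the exponents $(a_1, a_3)$ (the black/white sector, attached to vertices $1, 3$) and one acting on $(a_0, a_2)$ (the blue/yellow sector, attached to vertices $0, 2$), each block equal to the conifold $C_k$ matrix from the previous section. This block-diagonal decoupling is the key structural input for everything that follows.

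For step (ii), let $P$ be a simple partition of $AD^{(4)}_k$. Each altered row of $P$ is either black/white or blue/yellow, and the rows $R_1(P) \cup R_2(P)$ partition the altered rows, so the weight of $P$ factors as a product of a contribution from $R_1$ involving only $y_1, y_3$ and a contribution from $R_2$ involving only $y_0, y_2$. Applying the block-diagonal $C_k^{-1}$ transforms each factor independently, and the computation $kj - a = h(P) + x(P)$ from the conifold proof, carried out separately in each sector, yields the factors $y_1^{x_1 + h_1 + \#R_1}\, y_3^{x_1 + h_1}$ and $y_0^{x_2 + h_2 + \#R_2}\, y_2^{x_2 + h_2}$. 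Multiplying gives the desired monomial. Passing to the limit is then handled exactly as in the conifold case: every simple partition of $AD^{(4)}_\infty$ (with finitely many non-removed stones) is realised inside all sufficiently large $AD^{(4)}_k$, the statistics $x_i$, $h_i$, $\#R_i$ stabilise, and $\phi$ delivers term-by-term convergence.

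The principal obstacle is step (i): tracking the eight frozen arrows through a length-4 mutation cycle, with 2-cycle deletion at each step, is noticeably more delicate than the Kronecker or conifold inductions, and one must explicitly verify that the black/white and blue/yellow sectors decouple at the even steps so that the block-diagonal form of $C_k^{-1}$ is genuine. Once this is in hand, step (ii) reduces to a direct bookkeeping computation that parallels the conifold proof sector by sector.
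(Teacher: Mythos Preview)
Your proposal is correct and follows essentially the same approach as the paper, which in fact gives no details at all beyond saying the argument is ``essentially similar to the analogous proposition in the previous section.'' Your plan to reduce to the conifold computation $kj - a = h(P) + x(P)$ in each colour sector, via a block-diagonal structure of $C_k^{-1}$, is exactly the elaboration one would expect of that sentence, and you have correctly identified the one nontrivial step the paper leaves implicit, namely verifying the block decoupling of the $C$-matrix at the even mutation steps.
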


\begin{proof}
Details omitted, but essentially similar to the analogous proposition in the previous section.
\end{proof}

\section{Conclusion}

\subsection{Open Questions}
Numerous questions remain to be answered, including: 
\begin{enumerate}
\item How do we explain the behavior seen in the $F_0$ quiver, where the transformed cluster variables split into two distinct sequences? Can we predict for which quivers such a fork occurs?
\item How can we prove that the odd-indexed F-polynomials for the $F_0$ quiver also stabilize? What is a combinatorial interpretation for these functions? 
\item Each of the three examples presented throughout this paper generalizes the previous in a natural way. What family of quivers and mutation sequences does this generalization ultimately extend to? 
\item Eager and Franco originally observed apparent stabilization for the dP1 quiver. So far, we have not investigated this case.
\item What characterizes the class of quivers and mutation sequences for which stabilization occurs? What is the underlying explanation that causes stabilization? And what significance does this have in the context of quiver gauge theories? 
\end{enumerate}

\subsection{Acknowledgments}  

 This research was carried out as part of the 2016 Summer REU program at the School of Mathematics, University of Minnesota, Twin Cities, and was supported by NSF RTG grant DMS-1148634. Special thanks my mentor, Gregg Musiker, and my TA, Ben Strasser, for this project. 

\bibliographystyle{plain}
\bibliography{bibliography}

\end{document}